\DeclareMathOperator*{\bigcart}{\raisebox{-0.3ex}{\text{\Large$\times$}}}
{ \theorembodyfont{\normalfont} 

\newtheorem{remark}{Remark}
}
\newtheorem{assumption}{Assumption}
\newtheorem{theorem}{Theorem}
\newtheorem{lemma}{Lemma}
\newtheorem{proposition}{Proposition}
\newcommand{\bigintersect}{\bigcap}
\newcommand{\N}{\mathbb{N}}
\newcommand{\R}{\mathbb{R}}
\newcommand{\qd}{\texttt{q}}    
\def\stopmodifc{\color{black}} 
\newcommand{\bluff}{{\hbox{\raise 15pt \hbox{\hskip 0.5pt}}}}
\newcommand{\rfb}[1]{\mbox{\rm
   (\ref{#1})}\ifx\undefined\stillediting\else:\fbox{$#1$}\fi}
\newcommand{\dd}   {{\rm d}\hbox{\hskip 0.5pt}}
\newcommand{\dst}{\displaystyle}
\def\be{\begin{equation}}
\def\ee{\end{equation}}
\def\ba{\begin{array}}
\def\ea{\end{array}}
\def\eqa{\begin{eqnarray}}
\def\eqe{\end{eqnarray}}
\title{\LARGE \bf Coordination of passive systems under quantized measurements
\thanks{An abridged 
version of this paper has been
presented
at the 50th IEEE Conference on Decision and Control and European Control
Conference, December 12-15, 2011, Orlando, FL. 
This work is partially supported by 
the Dutch Organization
for Scientific Research (NWO) within the project {\it QUantized Information Control for formation Keeping} (QUICK).
}
}
\author{
Claudio De Persis \footnote{ITM, Faculty of Mathematics and
Natural Sciences, University of Groningen, the
Netherlands, Tel: +31 50 363 3080, Email:
{\tt c.de.persis@rug.nl},
 and Dipartimento
di Informatica e Sistemistica, Sapienza Universit\`a di Roma,  Via
Ariosto 25, 00185 Roma, Italy.}
\and
Bayu Jayawardhana \footnote{
ITM, Faculty of Mathematics and
Natural Sciences, University of Groningen, the
Netherlands, Tel: +31 50 363 7156, Email:
{\tt bayujw@ieee.org, b.jayawardhana@rug.nl}}
}
\begin{document}

\maketitle

\begin{abstract}
In this paper we investigate a passivity approach to collective
coordination and synchronization problems in the presence of quantized measurements
and show that coordination tasks can be achieved in a practical
sense for a large class of passive systems.
 \end{abstract}


\section{Introduction}
In the very active area of consensus, synchronization and
coordinated control there has been an increasing interest in the
use of quantized measurements and control
(\cite{kashyap.et.al.aut07, AN-AO-AO-JNT:09, frasca.et.al.ijrnc09,
carli.et.al.ijrnc10, li.et.al.tac11, censi.murray.acc09} and
references therein). As a matter of fact, since these problems
investigate systems or agents which are distributed over a
network, it is very likely that the agents must exchange
information over a digital communication channel and quantization
is one of the basic limitations induced by finite bandwidth
channels. To cope with this limitation, measurements are processed
by quantizers, i.e.\ discontinuous maps taking values in a
discrete or finite set.
Another reason to consider quantized measurements stems
from the use of coarse sensors.\\
The use of quantized measurements induces a partition of the space
of measurements: whenever the measurement function crosses the
boundary between two adjacent sets of the partition, a new value
is broadcast through the channel. As a consequence, when the
networked system under consideration evolves in continuous time,
as it is often the case with e.g.\ problems of coordinated motion,
the use of quantized measurements results in a completely
asynchronous exchange of information among the agents of the
network. Despite the asynchronous information exchange and the use
of a discrete set of information values, meaningful examples of
synchronization or coordination can be obtained
(\cite{dimarogonas.johansson.aut10,
ceragioli.et.al.aut11, liu.cao.ifac11,cortes.aut06}).\\
There are other approaches to reach synchronization (consensus) with an asynchronous exchange of information, such as gossiping (\cite{boyd.et.al.tif06,carli.et.al.aut10}), where at each time step two randomly chosen agents exchange information. The latter approach is conceptually very different from the one considered in this paper, where the agents broadcast information when a local event occurs (the measurement crosses the partition boundary). Moreover, while gossiping algorithms are mainly devised for discrete-time systems, here we focus on continuous-time systems.
\\
In view of the several contributions to quantized coordination
problems available for discrete-time systems
(\cite{kashyap.et.al.aut07, AN-AO-AO-JNT:09, frasca.et.al.ijrnc09,
carli.et.al.ijrnc10, li.et.al.tac11, censi.murray.acc09}), one may
wonder whether it would be more convenient simply to derive the
sampled-data model of the system and then apply the discrete-time
results. Due to the distributed nature of the system, a
sampled-data approach to the design of coordinated motion
algorithms presents a few drawbacks: it might require synchronous
sampling at all the nodes of the network and consequent accurate
synchronization of all the node clocks; it might also require fast
sampling rates, which may not be feasible in a networked system
with a large number of nodes and connections. Finally, the
sampled-data model may not fully preserve some of the features of
the original model.
For these reasons, we focus here on continuous-time coordination
problems under quantized measurements.\\
A few  works on this class of problems have recently
appeared. The work \cite{cortes.aut06} deals with consensus
algorithms using binary control algorithms. In
\cite{dimarogonas.johansson.aut10} the attention is turned to
quantized measurements and the consensus problem under quantized
{\em relative} measurements is tackled. The same problem, but
considering quantized {\it absolute} measurements, is studied in
\cite{ceragioli.et.al.aut11}. The paper also introduces hysteretic
quantizers to prevent the occurrence of chattering due to the
presence of sliding modes. More recently, the work
\cite{liu.cao.ifac11} has studied the quantized consensus
algorithm for double integrators. A remarkable advancement in the
study of consensus algorithms over {\em time-varying}
communication graphs and using quantized measurements has been
provided by \cite{frasca.arxiv11}.
\\
Despite the unquestionable interest of the results in papers such
as (\cite{dimarogonas.johansson.aut10, ceragioli.et.al.aut11,
liu.cao.ifac11, cortes.aut06,frasca.arxiv11}), they present an
important limitation: they focus on agents with simple dynamics
such as single (\cite{dimarogonas.johansson.aut10,
ceragioli.et.al.aut11, cortes.aut06,frasca.arxiv11}) or double
integrators (\cite{liu.cao.ifac11}).
The goal of this paper is to investigate the potentials of an
approach to coordinated motion and synchronization which takes into account
simultaneously complex dynamics for the agents of the network and
quantized measurements.\\
In coordinated
motion, variables of interest are the position and the velocity
of each subsystem, and the problem is to devise control laws which
guarantee prescribed inter-agent positions and velocity tracking.
In this paper we focus on the approach to coordinated motion
proposed in \cite{arcak.tac07}. In
that paper, the author has shown how a number of coordination
tasks could be achieved for a class of passive nonlinear systems
and has been using this approach for related problems in
subsequent work (\cite{bai.et.al.aut08, bai.et.al.aut09}).
Others
have been exploiting passivity
(\cite{chopra.spong.cdc06,hokayem.et.al.ijrnc09,schaft.maschke.necsys10,nuno.et.al.tac11}
to name a few) in connection with coordination problems.
Our interest for the approach in \cite{arcak.tac07} stems from the fact
that it allows to deal with complex coordination tasks, including
consensus with velocity tracking, in way that
naturally lends itself to deal with the presence of quantized
measurements. In the approach of \cite{arcak.tac07},
a continuous feedback law is designed
to achieve the desired coordination task under appropriate
conditions. Thus the presence of quantized measurements can be
taken into account in this setting by introducing in the feedback
law static discontinuous maps (the previously recalled
quantizers). Although in the case of quantized measurements
the conditions in \cite{arcak.tac07}
are not fulfilled due to the discontinuous nature of the quantizers,
one can argue that an approximate or ``practical"
(\cite{ceragioli.et.al.aut11}) coordination task is achievable
under suitably modified conditions. This is the idea which is
pursued in this paper. In the case of a control system with a
single communication channel this was studied in
\cite{ceragioli.depersis.scl07}. Another reason to
consider the approach of \cite{arcak.tac07} is that it provides a
systematic way to deal with a large variety of cooperative control
problems, as it has been authoritatively proven in the recent book
\cite{bai.et.al.book}.
\\
A second aim of this paper is to study practical state
synchronization under quantized output feedback. In these
problems, one investigates conditions under which the state
variables of all the subsystems asymptotically converge to each
other, with no additional requirement on the velocity tracking.
Passivity
(\cite{chopra.spong.cdc06,stan.sepulchre.tac07,scardovi.sepulchre.aut}),
or the weaker notion of semi-passivity
(\cite{pogromsky.96,pogromsky.nijmeijer.csi01,
nijmeijer.steur.cs11}), has also played an important role in
synchronization problems. Here we mainly focus on the models
considered in \cite{chopra.spong.cdc06,scardovi.sepulchre.aut}.
\\
The main contribution of this paper is to show that
some of the results of  \cite{arcak.tac07} and \cite{scardovi.sepulchre.aut}
hold in a practical sense in the presence of quantized measurements. Because
the latter introduces discontinuities in the system, a rigorous
analysis is carried out relying on notions and tools from
nonsmooth control theory and differential inclusions.
As far as the coordination problem is concerned, although the
passivity approach of \cite{arcak.tac07} allows to consider a large variety of
coordination control problems, in this paper
we mainly focus on agreement problems in which agents aim at
converging to the same position.
\\
A few other papers have appeared which deal with coordination problems for
passive systems in the presence of quantization.
The work \cite{hokayem.et.al.ijrnc09} deals with a position coordination problem for
Lagrangian systems when delays and limited data rates are affecting the system.
The paper \cite{fradkov.et.al.cs1.09} deals with master-slave synchronization of
passifiable Lurie systems when the master and the slave
communicate over a limited data rate channel. The main difference of our paper compared with
\cite{hokayem.et.al.ijrnc09,fradkov.et.al.cs1.09} is that in the former
each system in the network transmits quantized information in a completely
asynchronous fashion and no common sampling time is required. From a mathematical
point of view, this means that our approach yields a discontinuous
closed-loop system as opposed to a sampled-data one. Moreover, the classes of systems and
the coordination problems considered here appear to be different
from those in \cite{hokayem.et.al.ijrnc09,fradkov.et.al.cs1.09}.
\\
The organization of the paper is as follows. The passivity
approach to coordination problems is recalled in Section
\ref{sect.passivity.approach}. In Section \ref{quant.coord.contr}
the coordination control problem
in the presence of uniform quantizers 
is formulated and the main results
are presented along with some examples.
The synchronization problem for passive systems under
quantized output feedback is studied in Section \ref{quant.synchro}.
In Section \ref{sec.final} a few guidelines for future research are
discussed.
In the Appendix some technical tools are reviewed for
the sake of readers' convenience.


\section{Preliminaries}
\label{sect.passivity.approach}

\vspace{-0.2cm}
Consider $N$ systems connected over an undirected graph $G=(V,E)$,
where $V$ is a set of $N$ nodes and $E\subseteq V\times V$ is a
set of $M$ edges connecting the nodes.
The standing assumption throughout the paper is that the graph $G$ is
{\em connected}.
Each system $i$, with
$i=1,2,\ldots, N$, is associated to the node $i$ of the graph and
the edges connect the nodes or systems which communicate. \\
Each system $i$ is described by
\begin{equation}\label{feedb.path}
\Sigma_i : \begin{array}{rl}
\dot \xi_i & = f_i(\xi_i) + g_i(\xi_i)u_i \\
w_i & = h_i(\xi_i) + v_i,\end{array}
\end{equation}
where the state $\xi_i\in \R^{n_i}$, the input $u_i\in \R^p$, the output $w_i\in \R^p$, the exogenous signal $v_i\in \R^p$ and
the maps
$f_i, g_i, h_i$ are assumed to be locally Lipschitz satisfying $f_i(\mathbf{0})=\mathbf{0}$,
$g_i(\mathbf{0})$ full column-rank,
$h_i(\mathbf{0})=\mathbf{0}$.
For the system $\Sigma_i$, we assume the following:

\begin{assumption}\label{assum.passivity}
There exists a
continuously differentiable
storage function $S_i:\R^{n_i}\to\R_+$ 
which is positive definite
and radially unbounded such that
\begin{equation}\label{passivity}
\nabla S_i(\xi_i)(\bluff f_i(\xi_i) + g_i(\xi_i)u_i )\leq -W_i(\xi_i) + h_i(\xi_i)^Tu_i,
\end{equation}
where $W_i$ is a
continous positive 
function which is zero at the origin.
\end{assumption}
Such a system $\Sigma_i$ is called a strictly-passive
system (with $v_i=0$). If $W_i$ is a 
non-negative
function, then $\Sigma_i$  is called a passive system.
\\
Label one end of each edge in $E$ by a positive sign and the other
one by a negative sign. Now, consider the $k$-th edge in $E$, with
$k\in \{1,2, \ldots,M\}$, and let $i,j$ be the two nodes connected
by the edge.
For the coordination problem, which is detailed in Subsection \ref{subsectioncoordination},
the relative measurements of the integral form $\int_0^t w_i(\tau)\dd \tau$ and $\int_0^t w_j(\tau)\dd\tau$ are used.
On the other hand, for the
synchronization problem, which is briefly reviewed in Subsection \ref{subsectionsynchronization},
we need the relative measurements of the signals $w_i$ and $w_j$.
Thus, depending upon specific problems, let $z_k$ describe the
difference between the signals $w_i$ and $w_j$
(or the difference between the signals
$x_i(t):=\int_0^t w_i(\tau)\dd \tau + x_i(0)$ and $x_j(t):=\int_0^t w_j(\tau)\dd\tau + x_j(0)$
with constant vectors $x_i(0), x_j(0) \in\R^{p}$)
and be defined as follows:
\[
z_k=\left\{\ba{ll}
w_i-w_j \;\; \text{(or} \;\; x_i-x_j\;\text{)} & \mbox{if $i$ is the positive end of the edge $k$}\\
w_j-w_i \;\; \text{(or} \;\; x_j-x_i\;\text{)} & \mbox{if $i$ is the negative end of the edge $k$}\;.
\ea\right.
\]
Recall also that the incidence matrix $D$ associated with the
graph $G$ is the $N\times M$ matrix such that
\[
d_{ik}=\left\{ \ba{rl}
+1 &\mbox{if node $i$ is the positive end of edge $k$}\\
-1 &\mbox{if node $i$ is the negative end of edge $k$}\\
0 & \mbox{otherwise}. \ea \right.
\]
By the definition of $D$, the variables $z$ can be concisely
represented as
\be\label{z} z=(D^T\otimes I_p) w\; \; \left(\bluff \text{or} \;\; z=(D^T\otimes I_p) x \; \right) \ee
where $w=[w_1^T\ldots w_N^T]^T$ and $x=[x_1^T\ldots x_N^T]^T$, respectively,
and the
symbol $\otimes$ denotes the Kronecker product of matrices (see
Appendix \ref{a-1} for a definition). \\
%
In this paper we are interested in control laws which use
quantized measurements. For each $k=1,2,\ldots, M$, instead of
$z_k$, the vector
\[
\qd(z_k):=(\qd(z_{k1})\ldots \qd(z_{kp}))^T
\]
is available, where $\qd$ is the quantizer map which is defined as follows.
Given a positive real number $\Delta$, we let $\qd: \R\to
\mathbb{Z}\Delta$ be the function
\be\label{scalar.quantizer} \qd(r)=
\Delta\left\lfloor \frac{r}{\Delta}+\frac{1}{2}\right\rfloor
\ee
with $\frac{1}{\Delta}$ the precision of the quantizer. As
$\Delta\to 0$, $\qd(r)\to r$. 
Observe that each entry of $z_k$ is quantized independently of the
others and the quantized information is then used in the control
law.
\begin{remark}
The results of the paper continue to hold if each
 quantizer  has its own resolution (that is, the information $z_{kj}$ is quantized by a quantizer
 with resolution $\Delta_{kj}$). However, to reduce the notational burden, we only deal with the case in which the quantizers have all the same resolution $\Delta$.
\end{remark}
\stopmodifc

In the following subsections, we review the results on passivity approach to the coordination
problems of \cite{arcak.tac07} and to the synchronization problems of \cite{scardovi.sepulchre.aut} without the quantized measurements.

\subsection{Passivity approach to the coordination problem}\label{subsectioncoordination}

In the coordination problems of \cite{arcak.tac07}, the signal $w_i$ of each system $\Sigma_i$ corresponds to
the velocity of the system, and thus, $x_i$, $i=1,\ldots,N$, represents the positions which must be coordinated
(recall that $x_i(t):=\int_0^t w_i(\tau)\dd \tau + x_i(0)$).
The coordination problem under consideration requires all the systems of the formation to move with a prescribed velocity
$v$, i.e., $v_1=v_2=\ldots=v_N=v$. Define
\be\label{feedb.path0}
y_i=\dot x_i-v
\ee
the velocity tracking
error. It can be checked from (\ref{feedb.path}) and the definition of $\dot x_i$ that $y_i=h(\xi_i)$.
The standing assumption is that, possibly after
a preliminary feedback which uses information available locally,
each system $\Sigma_i$ is strictly passive, i.e., (\ref{passivity}) holds
with $W_i$ positive definite. In other words,
it is strictly passive from the control input
$u_i$ to the velocity error $y_i$.
%
%
%
\\
For the sake of conciseness, the equations
(\ref{feedb.path}), (\ref{feedb.path0}) are
rewritten as
\be\label{closed.loop.x.xi}
\ba{rcl}
\dot x&=& \underbrace{\left(\ba{c} h_1(\xi_1)\\ \vdots \\
h_N(\xi_N) \ea\right)}_{h(\xi)} + \underbrace{ \left(\ba{c} v\\
\vdots \\ v \ea\right)}_{\mathbf{1}_N\otimes v}
\\
\dot \xi&=&
\underbrace{ \left(\ba{c} f_1(\xi_1)\\ \vdots \\ f_N(\xi_N)
\ea\right)}_{f(\xi)} + \underbrace{ \left(\ba{ccc}
g_1(\xi_1)&  \ldots &  \mathbf{0}\\
\vdots &   \ddots & \vdots\\
\mathbf{0}&  \ldots & g_N(\xi_N)\\
\ea\right)}_{g(\xi)}u
\ea\ee
where $x=(x_1^T\ldots x_N^T)^T$, $\xi=[\xi_1^T\ldots\xi_N^T]^T$, $u=[u_1^T\ldots
u_N^T]^T$, $\mathbf{1}_N$ is the $N$-dimensional vector whose
entries are all equal to $1$ and $\mathbf{0}$ denotes a vector
 of appropriate dimension of all zeros.

The formation control problem consists of designing each control
law  $u_i$, with $i=1,2,\ldots,N$, in such a way that it uses only
the information available to the agent $i$ and guarantees the
following two specifications:
\begin{description}
\item{(i)} $\lim_{t\to\infty} |\dot x_i(t)-v(t)| =0$ for each $i=1,2,\ldots, N$,
with $v(t)$ a bounded and piece-wise continuous reference velocity
for the formation;
\item{(ii)} $z_k(t)\to {\cal A}_k$ as $t\to \infty$ for each
$k=1,2,\ldots,M$, where ${\cal A}_k\subset \R^p$ are the
prescribed sets of convergence \footnote{We refer the interested
reader to \cite{arcak.tac07} for examples of sets
 ${\cal A}_k$ related to some coordination problems.
 The sets ${\cal A}_k$  which are of interest in this paper will be introduced in
 (\ref{A.k}).} and $z=(D^T\otimes I_p) x$ as defined in (\ref{z}).
\end{description}
 In \cite{arcak.tac07}, where measurements without quantization are considered,
the case 
${\cal A}_k=\{\mathbf{0}\}$
is referred to as the {\em agreement problem}.

Let $P_k:\R^p\to \R$, for $k=1,2,\ldots, N$, be nonnegative continuously differentiable
(the latter assumption will be removed in the next section)
and radially unbounded functions whose minimum
is achieved at the points in ${\cal A}_k$. To be more precise,
the functions $P_k$ are assumed to satisfy
\be\label{properties.P}
P_k(z_k)=0\;\mbox{and}\; \nabla
P_k(z_k)=\mathbf{0}\quad \mbox{if and only if}\quad  z_k\in {\cal
A}_k. \ee
Define
\be\label{psi.via.P} \nabla P_k(z_k)=\psi_k(z_k)\;. \ee
The feedback laws proposed in  \cite{arcak.tac07} to solve the
problem formulated above are:
\be\label{arcak.control} u_i=-\dst\sum_{k=1}^M d_{ik}
\psi_k(z_k)\;,\; i=1,2,\ldots,N. \ee
Observe that, as required, each control law $u_i$ uses only information
which is available to the agent $i$. Indeed, $d_{ik}\ne 0$ if and
only if the edge $k$ connects $i$ to one of its neighbors. In
compact form, (\ref{arcak.control}) can be rewritten as
\be\label{arcak.control.compact} u=-(D\otimes I_p)\psi(z)\;, \ee
where
$\psi(z)=[\psi_1(z_1)^T\ldots\psi_M(z_M)^T]^T$
and $z$ is as in
(\ref{z}).
Before ending the section, we recall
that the system below with input $\dot x$ and output $-u$, namely
(see Figure 2 in \cite{arcak.tac07} for a pictorial representation
of the system)
\be\label{feedf} \ba{rcl}
\dot z&=& (D^T\otimes I_p) \dot x\\
-u &=& (D\otimes I_p)\psi(z) \ea\ee
is passive from $\dot x$ to $-u$
with storage function $\sum_{k=1}^M P_k(z_k)$.
We remark that the function $P_k(z_k)$ is chosen in such a way that the
region where the variable $z_k$ must converge for the system to
achieve the prescribed coordination task coincides with the set of
the global minima of $P_k(z_k)$. Hence, the coordination task
guides the design of $P_k(z_k)$ which in turn allows to determine
the control functions (\ref{arcak.control}) via (\ref{psi.via.P}).
The functions $P_k(z_k)$ in the case of agreement problems via
quantized control laws will be designed in Section \ref{quant.coord.contr}.

\subsection{Passivity approach to the synchronization problem}\label{subsectionsynchronization}

In the synchronization problem of
\cite[Theorem 4]{scardovi.sepulchre.aut},
each system $\Sigma_i$ in (\ref{feedb.path}) (with $v_i=0$) is assumed to be linear, identical and passive.
For such setting, each (passive) system $\Sigma_i$ is of the form
\be\label{lti}\ba{rcll}
\dot \xi_i &=&  A \xi_i +B u_i\\
w_i &=& C \xi_i & i=1,2,\ldots, N \ea\ee
where $\xi_i\in \R^n$, $u_i,
w_i\in \R^p$ and the
passivity of $\Sigma_i$ implies that the
following assumption holds:
\begin{assumption}\label{a.passivity}
There exists an $(n\times n)$ matrix $P=P^T>0$ such
that
\[
A^T P+PA\le 0,\; B^T P=C\;.
\]
\end{assumption}

The synchronization problems can then be stated as designing each control
law $u_i$, $i=1,2,\ldots,N$, using only the information available to te agent $i$ such that,
for every $i$, $\xi_i-\xi_0 \to {\cal A}$
where $\xi_0$ is the trajectory of the autonomous system $\dot \xi_0 = A \xi_0$ which is initialized by the
average of the initial states, i.e., $\xi_0(0)=\frac{1}{N}\sum_i\xi_i(0)$, and ${\cal A}\subset \R^p$ is the
prescribed set of convergence. In the case without the quantized measurements, which is treated in \cite{scardovi.sepulchre.aut},
${\cal A}=\{\mathbf{0}\}$.
%
%
The coordination problem that is reviewed in Subsection \ref{subsectioncoordination}, is related to the
case when $\dot\xi_0=\mathbf{0}$ \cite{arcak.tac07}.
For another viewpoint, we can consider that (\ref{lti}) corresponds to the case in the Subsection \ref{subsectioncoordination}, where the
mapping $u\mapsto y$ is an identity operator, $v=\mathbf{0}$ and one takes into
account dynamics on the subsystem $x$ which are more complex than those of a single integrator.

In addition to output synchronization, it is well-known that
the {\em states} of interconnected passive systems synchronize under observability assumption
(\cite{chopra.spong.cdc06}). The largest invariant set of the interconnected systems,
when the measurements are not quantized and $(C,A)$ is
observable, is the set $\{\xi\in \R^{nN}:\xi_1=\ldots=\xi_N\}$. In the case of
quantized measurements, the
invariant set is larger.
Our main result in Section \ref{quant.synchro} provides an estimate of the invariant set of the interconnected systems with quantized measurements.
To this purpose, we rely on a result of
exponential synchronization under static output
feedback control laws  and time-varying graphs
which has been investigated in
\cite{scardovi.sepulchre.aut}. In the following statement, we
recall Theorem 4 of \cite{scardovi.sepulchre.aut} specialized to
the case of time-invariant undirected graphs:
\begin{theorem}
Let Assumption \ref{a.passivity} hold and suppose that
the pair $(C,A)$ is observable.
Let the communication graph be
undirected and connected, and denote $z=(D^T\otimes I_p) w$ as in {\rm (\ref{z})} with $w=[w_1^T\ldots w_N^T]^T$.
Then the solutions of
\be\label{unquantized} \dot \xi_i =  A \xi_i -B \dst\sum_{k=1}^M
d_{ik}z_k,\;i=1,2,\ldots, N \ee satisfy
\be\label{synchro.unquant} \lim_{t\to+\infty} \left\| \xi_i(t)-
\frac{\mathbf{1}_N^T\otimes I_n}{N}\xi(t) \right\|=0
\ee
where $\xi=[\xi_1^T \xi_2^T \cdots \xi_N^T]^T$ and the
convergence is exponential. More precisely, the solutions converge
exponentially to the solution of $\dot \xi_0=A \xi_0$ initialized to
the average of the initial conditions of the systems
{\rm (\ref{unquantized})}, i.e.\
$\xi_0(0)=\mathbf{1}_N^T\otimes I_n \xi(0)/N$.
\end{theorem}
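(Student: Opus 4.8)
The plan is to stack the agents into compact form, diagonalize the graph Laplacian so that the closed loop splits into $N$ independent modal subsystems, and then combine the passivity relations of Assumption \ref{a.passivity} with observability to show that every nonzero mode decays exponentially while the zero mode reproduces the average dynamics $\dot\xi_0=A\xi_0$. Concretely, using $w=(I_N\otimes C)\xi$ and $z=(D^T\otimes I_p)w=(D^T\otimes C)\xi$, the feedback $u=-(D\otimes I_p)z$ together with the mixed-product rule for the Kronecker product rewrites (\ref{unquantized}) as
\be
\dot\xi=(I_N\otimes A)\xi-(DD^T\otimes BC)\xi=(I_N\otimes A-L\otimes BC)\xi,
\ee
where $L:=DD^T$ is the graph Laplacian. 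Since $G$ is undirected and connected, $L=L^T\ge 0$ has a simple zero eigenvalue with eigenvector $\mathbf{1}_N$, so I can write $L=U\Lambda U^T$ with $U=[u_1|\cdots|u_N]$ orthogonal, $u_1=\mathbf{1}_N/\sqrt{N}$, and $\Lambda=\diag(0,\lambda_2,\dots,\lambda_N)$ with $0<\lambda_2\le\cdots\le\lambda_N$.

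Setting $\eta=(U^T\otimes I_n)\xi$ and applying the mixed-product rule once more, the dynamics become block-diagonal and decouple into
\be
\dot\eta_\ell=(A-\lambda_\ell BC)\eta_\ell,\qquad \ell=1,\dots,N.
\ee
The mode $\ell=1$ has $\lambda_1=0$, hence $\dot\eta_1=A\eta_1$ with $\eta_1=(u_1^T\otimes I_n)\xi=\sqrt{N}\,\tfrac{\mathbf{1}_N^T\otimes I_n}{N}\xi$; this is exactly $\sqrt N$ times the average trajectory, which therefore satisfies $\dot\xi_0=A\xi_0$ initialized at $\xi_0(0)=(\mathbf{1}_N^T\otimes I_n)\xi(0)/N$, as claimed. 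Because $\xi-\mathbf{1}_N\otimes\xi_0=\sum_{\ell\ge 2}(u_\ell\otimes I_n)\eta_\ell$, the theorem reduces to showing $\eta_\ell\to 0$ exponentially for every $\ell\ge 2$, i.e.\ that $A-\lambda BC$ is Hurwitz for each $\lambda>0$.

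This last claim is the crux, and I expect it to be the main obstacle. Fix $\lambda>0$ and take $V(\eta)=\eta^T P\eta$ with $P$ from Assumption \ref{a.passivity}. Using $B^TP=C$ (equivalently $PB=C^T$), a short computation gives
\be
\dot V=\eta^T\big(A^TP+PA-2\lambda\,C^TC\big)\eta\le-2\lambda\,\|C\eta\|^2\le 0,
\ee
so each mode is only marginally (Lyapunov) stable from passivity alone. The delicate step is upgrading this non-strict inequality to strict contraction, and this is precisely where observability is needed: on the largest invariant set contained in $\{\dot V=0\}=\{C\eta=0\}$ one has $C\eta(t)\equiv 0$, whence $\lambda BC\eta=0$ and $\dot\eta=A\eta$, so observability of $(C,A)$ forces $\eta\equiv 0$. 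By LaSalle the origin is the only invariant set, and since $A-\lambda BC$ is a constant matrix, asymptotic stability is equivalent to its being Hurwitz, yielding exponential decay of each mode $\ell\ge 2$ with rate controlled by the algebraic connectivity $\lambda_2$. Reassembling the modes gives exponential convergence of $\xi-\mathbf{1}_N\otimes\xi_0$ to zero, which is (\ref{synchro.unquant}); the hard part, as noted, is ruling out persistent oscillations on $\ker C$ via observability rather than passivity.
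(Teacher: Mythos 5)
Your proof is correct, but it takes a genuinely different route from the paper's. The paper (following Scardovi--Sepulchre) works directly with the stacked disagreement vector $\tilde\xi=(\Pi\otimes I_n)\xi$, $\Pi=I_N-\mathbf{1}_N\mathbf{1}_N^T/N$, whose dynamics are governed by the single matrix $\tilde A$ of (\ref{A.tilde}); it uses the one Lyapunov function $V(\tilde\xi)=\tilde\xi^T(I_N\otimes P)\tilde\xi$, derives a dissipation inequality with gain $\lambda_2$ on the output disagreement, and then concludes exponential convergence by invoking observability together with Theorem 1.5.2 of Sastry and Bodson (an output-injection/uniform-complete-observability lemma). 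The virtue of that route is that it survives for \emph{time-varying} graphs, which is the setting of the cited source. You instead diagonalize the Laplacian $L=DD^T$, decouple into modes $\dot\eta_\ell=(A-\lambda_\ell BC)\eta_\ell$, identify $\eta_1$ with the average dynamics, and prove the key spectral fact --- $A-\lambda BC$ Hurwitz for every $\lambda>0$ under passivity plus observability --- by a per-mode LaSalle argument. This is more elementary and self-contained (no external lemma), but it is tied to the static graph, since the simultaneous diagonalization requires a constant $L$. Notably, the fact you prove is exactly what the paper \emph{asserts without proof} in the proof of Lemma \ref{firstlemma} in the Appendix (``since $(A,B,C)$ is passive and $(C,A)$ is detectable, the matrices $A-\lambda_i BC$ are Hurwitz''), so your LaSalle step supplies a justification the paper leaves implicit; note that under mere detectability your argument needs one extra remark, since LaSalle then only confines the invariant set to the unobservable subspace, on which $\dot\eta=A\eta$ decays by detectability. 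Two small blemishes, neither fatal: $\{\dot V=0\}$ is only \emph{contained in} $\{C\eta=0\}$, not equal to it, because the term $\eta^T(A^TP+PA)\eta$ can vanish on a larger or smaller set --- you use the inclusion in the correct direction, so nothing breaks; and your parenthetical claim that the decay rate is ``controlled by $\lambda_2$'' is unjustified (the spectral abscissa of $A-\lambda BC$ need not be monotone in $\lambda$), though the theorem only requires some exponential rate, which follows because there are finitely many Hurwitz blocks.
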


Let $\tilde \xi= \xi-\frac{\mathbf{1}_N\mathbf{1}_N^T\otimes I_n}{N}\xi=(\Pi \otimes I_n) \xi$, with $\Pi=I_N-
\frac{\mathbf{1}_N\mathbf{1}_N^T}{N}$, be the disagreement vector.
From (\ref{unquantized}), $\tilde \xi(t)$ obeys the equation
\be\label{A.tilde}
\ba{rcl} \dot{\tilde \xi} &=&
\underbrace{[I_N\otimes A -(I_N\otimes B)(DD^T\otimes
I_p)(I_N\otimes C)]}_{\tilde A}\tilde \xi \ea\ee
and the convergence
result (\ref{synchro.unquant}) can be restated as
$\lim_{t\to+\infty} ||\tilde \xi(t)||=0$.
%
%
The proof of the result rests on showing that the Lyapunov function
\[
V(\tilde \xi)= \tilde \xi^T (I_N\otimes P) \tilde \xi
\]
along the solutions of (\ref{A.tilde}) satisfies the inequality
\[
\dot V(\tilde \xi)\le -\lambda_2 ||(\Pi \otimes I_p) 
\tilde \xi
||^2,
\]
where $\lambda_2$ is the algebraic
connectivity of the graph, i.e.\ the smallest non-zero eigenvalue
of the Laplacian $L=DD^T$.
Then the thesis descends from the observability assumption and
Theorem 1.5.2 in \cite{sastry.bodson.book}.

%


%

\section{Quantized coordination control}\label{quant.coord.contr}

\subsection{A practical agreement problem}

Despite the generality allowed by the passivity approach of
\cite{arcak.tac07},  in this paper
we focus on an agreement problem. By
an agreement problem it is meant a special case of coordination in
which all the variables $x_i$ connected by a path converge to each
other. In the problem formulation in Section
\ref{sect.passivity.approach}, this amounts to have ${\cal
A}_k=\{\mathbf{0}\}$ for all $k=1,2,\ldots, M$. When using
{\em statically}\footnote{The use of dynamic quantizers can lead to asymptotic results. See
\cite{carli.bullo.sicon09, li.et.al.tac11} for a few results for discrete-time systems. Dealing with continuous-time 
systems and dynamic quantizers poses a few extra challenges, which are not addressed in this paper. }
quantized measurements, however, it is a well established fact
(\cite{kashyap.et.al.aut07,dimarogonas.johansson.aut10,
ceragioli.et.al.aut11}) that a coordination algorithm
leads
to a practical agreement result, meaning that each variable
$z_k$ converges to a compact set 
containing
the origin, rather than
to the origin itself. Motivated by this observation, we set in
this paper a weaker convergence goal, namely for each
$k=1,2,\ldots, M,$ we ask the target set ${\cal A}_k$ to be of the
form:
\be\label{A.k} {\cal A}_k=\bigcart_{j=1}^p [-a, a]
\ee
where $a$ is a
positive constant
and the symbol $\times$ denotes the Cartesian
product. Then the design procedure of Section
\ref{sect.passivity.approach} prescribes to choose a non-negative potential
function $P_k(z_k)$ which is radially unbounded on its domain of
definition and such that (\ref{properties.P}) holds.
If such a function exists then the control law is chosen via
(\ref{psi.via.P}). To take into account the presence of quantized
measurements,  the nonlinearities $\psi_k$ on the right-hand side of
(\ref{psi.via.P}) should take
the form
%
\be\label{quantized.psi.via.P} \psi_k(z_k)=
\chi_k(\qd(z_k))\;, \ee
%
with $\chi_k$  to be defined later.\\
The presence of quantized measurements, i.e.\ of
$\qd(z_k)$,
makes the right-hand side of (\ref{psi.via.P}) discontinuous
and asks for a redefinition of the requirements (\ref{properties.P}).
In this paper, we look for a {\em locally Lipschitz} radially unbounded non-negative
functions $P_k$ which satisfy
\be\label{properties.P.disc} P_k(z_k)=0\;\mbox{and}\; \mathbf{0}\in
\partial
P_k(z_k)\quad \mbox{if and only if}\quad  z_k\in {\cal
A}_k, \ee
where $\partial P_k(z_k)$ is the Clarke generalized gradient (see
Appendix \ref{a0} for a definition) which is needed since $P_k(z_k)$ is
now not continuously differentiable. Similarly to (\ref{properties.P}), we are asking
${\cal A}_k$ to be the set of all local and global minima for $P_k(z_k)$. \\
A candidate function $P_k(z_k)$ with the properties
(\ref{properties.P.disc}) and such that a function
$\chi_k$ exists for which (\ref{psi.via.P}), (\ref{quantized.psi.via.P}) hold, is the
function
%
%
\be\label{potential} P_k(z_k)= \dst\sum_{j=1}^p
\dst\int_0^{z_{kj}} \qd(s) ds, \ee
where $z_{kj}$ is the $j$th component of the vector $z_k\in \R^p$
(see Fig.~\ref{P_z} for a picture of $P_k(z_k)$).
\begin{figure}
\begin{center}
\includegraphics[scale=0.5]{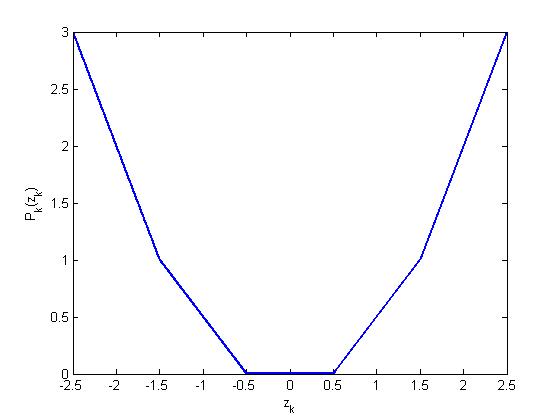}
\caption{\label{P_z}
The graph of  $P_k(z_k)$ with $z_k\in \R$ and $\Delta=1$. }
\end{center}
\end{figure}
%

Such a function is defined on all $\R^p$, is radially unbounded
and locally Lipschitz. By Rademacher's theorem
(\cite{clarke.et.al.book}, Chapter 3) 
it is
differentiable almost everywhere. In all the points of $\R^p$
where it is differentiable
$\nabla
P_k(z_k)=\qd(z_k)$
i.e.\
(\ref{psi.via.P}), (\ref{quantized.psi.via.P}) holds with
$\chi_k={\rm Id}$ (${\rm Id}:\R^p\to \R^p$ is the identity function). Bearing in mind
the definitions (\ref{scalar.quantizer}) and (\ref{A.k}), to
satisfy
(\ref{properties.P.disc})
it is
necessary and sufficient to set
$a=\frac{\Delta}{2}$.
In what follows we examine the evolution of the system
(\ref{closed.loop.x.xi}) under the control law:
\be\label{quantized.control} u_i=-\dst\sum_{k=1}^M d_{ik}
\qd(z_k)\;,\; i=1,2,\ldots,N.
\ee

\subsection{Closed-loop system}
Similarly to (\ref{arcak.control.compact}), we write the quantized
control law in compact form as:
\be\label{control.by.quantized.meas} u=-(D\otimes I_p)\qd(z)\;,
\ee
where
$\qd(z)=(\qd(z_1)^T\ldots
\qd(z_M)^T)^T$.
The closed-loop system then
takes the following expression:
\be\label{closed.loop.x.xi2}\ba{rcl}
\dot x&=& h(\xi)+\mathbf{1}_N\otimes v\\
\dot \xi&=&
 f(\xi)+g(\xi)(-(D\otimes I_p)\qd(z)),
 \ea\ee
where $z=(D^T\otimes I_p) x$ and the maps  $f,g,h$ are as in (\ref{closed.loop.x.xi}).

\subsubsection{Control scenario and implementation}
Before proceeding to the analysis of the system, it is important to motivate in more detail the control scenario we consider and how the overall control scheme is implemented.
\\[0.1cm]
For each pair of neighboring agents, one of the two is equipped with a sensor which  continuously take the relative measurement with respect to its neighbor, e.g.,  a sonar or a radar. Not all the agents 
are equipped with these sensors since they might have very dedicated tasks  in the formation and space must be saved for other hardware needed to accomplish these tasks. On the other hand, since these agents need information to maintain their positions in the formation, they receive  such information in quantized form from their neighbors via a digital communication channel.
\\[0.1cm]
The implementation of the control law in (\ref{closed.loop.x.xi2})
can be given by the {\it quantization-based 
distributed control protocol} as follows.\\[0.1cm]
({\it Initialization}.) At time $t_0=0$, all sensors measures
$z_k(t_0)$, $k=1,2,\ldots,p$. The processing units collocated with
the sensors computes $\qd(z_k(t_0))$ and the resulting value is
broadcasted to its neighbors. Each agent $i$ computes the local
control law $u_i$ as in \rfb{quantized.control} and the control
value is held until new information is available. Note that the
closed-loop system   evolves according to
\[
\ba{rcl}
\dot x&=& h(\xi)+\mathbf{1}_N\otimes v\\
\dot \xi&=&
 f(\xi)+g(\xi)(-(D\otimes I_p)\qd(z(t_0)))
 \ea\]
for all $t>t_0$ until new information is available.\\[0.1cm]
({\it Quantization-based 
transmission and control update}.) Let
$\ell=1$ and let $t_\ell$ be the smallest time at which
$t_\ell>t_{\ell-1}$ and the processing unit of a sensor in the
$k$-th edge detects that $\qd(z_k(t_\ell))\neq
\qd(z_k(t_{\ell-1}))$. In this case, the quantized information
$\qd(z_k(t_\ell))$ is transmitted to its neighbor and the local
control law of the $i$-th and $j$-th agents, where $(i,j)$ is the
pair of nodes linked by the $k$-th edge, is updated by
\[
\ba{rcl}
u_i(t_\ell)&=& u_i(t_{\ell-1}) -d_{ik}\big(\qd(z_k(t_\ell))-\qd(z_k(t_{\ell-1}))\big)\\
u_j(t_\ell)&=& u_j(t_{\ell-1})
-d_{jk}\big(\qd(z_k(t_\ell))-\qd(z_k(t_{\ell-1}))\big).
 \ea
\]
There is no other information exchange and hence the rest of the
agents maintains their local control values. The local control law
is now fixed until new information is transmitted again. 
For all $t>t_\ell$ and until this new transmission occurs, the evolution of the
closed-loop system 
is given by
\[
\ba{rcl}
\dot x&=& h(\xi)+\mathbf{1}_N\otimes v\\
\dot \xi&=&
 f(\xi)+g(\xi)(-(D\otimes I_p)\qd(z(t_\ell))).
 \ea\]
The quantization-based event-triggered control update process is
iterated with the index value $\ell$ incremented by one.\\[0.2cm]
A few remarks are in order: 
\begin{description}
\item{(i)}
The construction outlined above results in a sequence of unevenly-spaced sampling times $t_\ell$, $\ell\in \N$, at which sensors located at the systems  broadcast quantized information to neighboring systems.
This information is used by local controllers to update the control value. The control laws turn out to be  piece-wise constant functions of time whose value is updated whenever new information is received. 
\item{(ii)}
Notice that even the agent which measures $z_k$ implements a control
law in which $\qd(z_k)$ is used instead of $z_k$ itself.  This is mainly motivated by our
need to preserve a ``symmetric" structure in the closed-loop system.
In fact,  given agents $i,j$ and their relative distance $z_k$, in
the case of unquantized information, agent $i$ would use $z_k$ in
the control law, and agent $j$, $-z_k$. Similarly, in the case of
quantized measurements, it is very helpful in the analysis to employ
$\qd(z_k)$ in the control law for agent $i$ and $-\qd(z_k)$ in the
one for agent $j$.
\item{(iii)}
In the quantization-based transmission and control protocol described
above, the solution of the closed-loop system is not prevented to
evolve along a discontinuity surface.
In practice, due to delays in the transmission and in the
implementation of the control law, this could result in chattering, which
is of course undesirable in the present context, since it would
require fast information transmission. Nevertheless, in
\cite{ceragioli.et.al.aut11} a new class of hybrid quantizers have
been considered which prevent the occurrence of chattering. This
class could also be used  for the problem at hand in this paper, but
this is not pursued further for the sake of brevity. 
\end{description}

\subsubsection{A notion  of solution }
The system (\ref{closed.loop.x.xi2}) has a discontinuous right-hand side due to the
presence of the quantization functions and its analysis requires
a suitable notion of solution. In this paper we adopt
Krasowskii solutions. In fact, it was shown in
\cite{ceragioli.et.al.aut11} that Carath\'eodory solutions may not
exist for agreement problems. Moreover, Krasowskii solutions
include Carath\'eodory solutions and the results we derive for the
former also holds for the latter
in case they exist.\\
Denoted by $\dot X(t)=F(t,X)$ the system
(\ref{closed.loop.x.xi2}), a function $X(\cdot)$ defined on an
interval $I\subset \R$ is a Krasowskii solution to the system on
$I$ if it is  absolutely continuous and satisfies the differential
inclusion (\cite{OH:79})
\begin{equation}\label{diffinclusion}
\dot X(t)\in {\cal{K}}(F(t,X)):=\bigintersect_{\delta >0}{\overline
{\rm co}}\, (F(t,B(X,\delta)))
\end{equation}
for almost every (a.e.) $t\in I$. The operator ${\rm co}(S)$ denotes the convex closure of
$S$, i.e. the smallest closed set containing the convex hull of $S$.
Since the right-hand side of
(\ref{closed.loop.x.xi2}) is
locally bounded, local existence of Krasowskii solutions is
guaranteed (\cite{OH:79}). \\
The differential inclusion corresponding to the system
(\ref{closed.loop.x.xi2}) can be written explicitly.
%
More
precisely, for every $k\in\{1,2,\ldots, M\}$ and
$i\in\{1,2,\ldots,p\}$, 
we observe that ${\cal{K}}\qd(r)$ is given by
\begin{equation*}
{\cal{K}}\qd(r) =
\left\{\begin{array}{ll}m\Delta
& r \in \big((m-\frac{1}{2})\Delta,(m+\frac{1}{2})\Delta\big), m\in\mathbb{Z} \\
\left[m\Delta, (m+1)\Delta \right] & r =
(m+\frac{1}{2})\Delta, m\in \mathbb{Z}. \end{array} \right.
\end{equation*}
Using ${\cal{K}}\qd$,
the differential inclusion
(\ref{diffinclusion}) for (\ref{closed.loop.x.xi2}) can be written
as
\be\label{closed.loop.x.xi2.diffincl}\ba{rcl}
\dot x&=& h(\xi)+\mathbf{1}_N\otimes v\\
\dot \xi&\in&
 f(\xi)+g(\xi)(-(D\otimes I_p){\cal K}\qd(z)),
 \ea\ee
where
${\cal K}\qd(z):=\bigcart_{k=1}^M {\cal K}\qd(z_k)$,
${\cal K}\qd(z_k):=\bigcart_{j=1}^p {\cal K}\qd(z_{kj})$.
Note that we have used the calculus rule for the set-valued map
${\cal K}$
\[
{\cal{K}}\big[g(\xi)(-(D\otimes I_p)\qd(z))\big] =
g(\xi)(-(D\otimes I_p){\cal K}\qd(z))
\]
(see also \cite[Theorem 1]{cortes.csm08, paden.sastry.cs87}).
The Krasowskii solutions
to (\ref{closed.loop.x.xi2}) are
also Filippov solutions as it  follows from \cite[Lemma
2.8]{OH:79} for a piecewise continuous vector field $F$. Since
every Carath\'eodory solutions to (\ref{closed.loop.x.xi2}) is
also a Krasowskii solution to (\ref{closed.loop.x.xi2}), the
stability properties of the Krasowskii solutions are also
inherited by the classical Carath\'eodory solutions \cite{OH:79}
in case the latter exist.

\subsubsection{Analysis}

Recalling that
\[
(D^T\otimes I_p)(\mathbf{1}_N\otimes v)=\mathbf{0}
\]
and bearing in mind (\ref{feedf}),
the system (\ref{closed.loop.x.xi2}) in the coordinates $(z,\xi)$
writes as
\be\label{closed.loop.z.xi}\ba{rcl}
\dot z &=& (D^T\otimes I_p) h(\xi)\\
\dot \xi&=& f(\xi)+g(\xi)(-(D\otimes I_p)\qd(z)). \ea\ee
Even the system above is discontinuous and again its solutions must be intended in the Krasowskii sense.
It is straightforward to verify that, given any Krasowskii solution $(x,\xi)$ to
(\ref{closed.loop.x.xi2}), the function $(z,\xi)=((D^T\otimes I_p)
x, \xi)$ is a Krasowskii solution to (\ref{closed.loop.z.xi}).
The differential inclusion corresponding to (\ref{closed.loop.z.xi})
is easily understood from (\ref{closed.loop.x.xi2.diffincl}).
In what follows we investigate the asymptotic properties of the
Krasowskii solutions to (\ref{closed.loop.z.xi}) and
infer stability properties of (\ref{closed.loop.x.xi2}).\\
A few notions of
nonsmooth control theory which are used in the proofs are recalled
in the Appendix \ref{a0}. The first fact we notice is the
following:
\begin{lemma}\label{lemma.main}
Let Assumption \ref{assum.passivity} hold
and let the communication graph $G$ be undirected and connected. Then
any Krasowskii solution to (\ref{closed.loop.z.xi}) converges to
the set of Krasowskii equilibria: {\rm
\be\label{set} \{(z, \xi)\,:\, \xi={\mathbf 0},\; {\mathbf 0}\in
(D\otimes I_p){\cal K}\qd(z)\}\;. \ee }
\end{lemma}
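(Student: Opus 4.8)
The plan is to prove the lemma by a nonsmooth LaSalle invariance principle for the differential inclusion \eqref{closed.loop.x.xi2.diffincl} written in the $(z,\xi)$ coordinates \eqref{closed.loop.z.xi}, using the Lyapunov function
\[
V(z,\xi)=\sum_{i=1}^N S_i(\xi_i)+\sum_{k=1}^M P_k(z_k),
\]
where the $S_i$ are the storage functions of Assumption \ref{assum.passivity} and the $P_k$ are the potentials \eqref{potential}. This $V$ is nonnegative, locally Lipschitz and radially unbounded in $(z,\xi)$; moreover each $P_k$ is convex, since its integrand $\qd$ is nondecreasing, so $V$ is Clarke regular, which is what makes the invariance principle applicable. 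Because the $S_i$ are $C^1$ and each $P_k$ depends only on $z_k$ with $\partial_{z_{kj}}P_k=\mathcal{K}\qd(z_{kj})$ at the corners, the generalized gradient is $\partial V(z,\xi)=\{(\nabla_\xi S(\xi),p_z):p_z\in\mathcal{K}\qd(z)\}$.

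The central step is to bound the set-valued derivative of $V$ along \eqref{closed.loop.z.xi}. Take any admissible velocity $v=(\dot z,\dot\xi)$, so $\dot z=(D^T\otimes I_p)h(\xi)$ and $\dot\xi=f(\xi)+g(\xi)u$ with $u=-(D\otimes I_p)q_z$ for some $q_z\in\mathcal{K}\qd(z)$, and any generalized gradient $\zeta=(\nabla_\xi S(\xi),p_z)\in\partial V$. Summing \eqref{passivity} over $i$ gives $\sum_i\nabla S_i(\xi_i)\dot\xi_i\le -W(\xi)+h(\xi)^Tu$ with $W(\xi):=\sum_i W_i(\xi_i)$, while $p_z^T\dot z=h(\xi)^T(D\otimes I_p)p_z$ since $(D^T\otimes I_p)^T=(D\otimes I_p)$. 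Adding the two and substituting $u$,
\[
\zeta^Tv\le -W(\xi)+h(\xi)^T(D\otimes I_p)(p_z-q_z).
\]

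Here lies the key idea, and the main obstacle, of the argument: the gradient variable $p_z$ and the control variable $q_z$ both range over the \emph{same} regularized set $\mathcal{K}\qd(z)$, which is exactly the ``symmetric'' closed-loop structure. For any value $a$ in the set-valued derivative $\dot{\tilde V}(z,\xi)$ there is a $v$ with $\zeta^Tv=a$ for \emph{all} $\zeta\in\partial V$; evaluating at the admissible choice $\zeta$ with $p_z=q_z$ cancels the last term and yields $a\le -W(\xi)$. Hence $\max\dot{\tilde V}(z,\xi)\le -W(\xi)\le 0$ for every $(z,\xi)$, so $V$ is nonincreasing along Krasowskii solutions; being radially unbounded, it confines every solution to a compact sublevel set, so trajectories are precompact and defined for all $t\ge 0$.

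Finally I would invoke the invariance principle for differential inclusions (Appendix \ref{a0}): every solution converges to the largest weakly invariant set contained in $\overline{\{(z,\xi):0\in\dot{\tilde V}(z,\xi)\}}$. The bound forces $W(\xi)=0$ there, hence $\xi=\mathbf{0}$ by positive definiteness of $W$. Along any solution lying in this invariant set one has $\xi(t)\equiv\mathbf{0}$, so $\dot\xi\equiv\mathbf{0}$; since $f(\mathbf{0})=\mathbf{0}$ this requires $\mathbf{0}\in g(\mathbf{0})\big(-(D\otimes I_p)\mathcal{K}\qd(z)\big)$, and because $g(\mathbf{0})$ is block diagonal with full-column-rank blocks, hence left-invertible, this is equivalent to $\mathbf{0}\in(D\otimes I_p)\mathcal{K}\qd(z)$. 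The invariant set therefore coincides with \eqref{set}, which proves the claim.
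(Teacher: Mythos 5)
Your proposal is correct and follows essentially the same route as the paper's own proof: the same Lyapunov function $V=S+P$ with regularity obtained from convexity of the $P_k$, the identical key cancellation obtained by choosing the gradient selection $p_z$ equal to the control selection $q_z$ (valid because $\partial P(z)=\mathcal{K}\qd(z)$), the nonsmooth LaSalle invariance principle from the Appendix, and the full-column-rank property of $g(\mathbf{0})$ to identify the invariant set with (\ref{set}). The only point you leave implicit, which the paper checks explicitly, is the easy converse that every point of (\ref{set}) is actually a Krasowskii equilibrium: since $h(\mathbf{0})=\mathbf{0}$ and there is a selection $w_z\in\mathcal{K}\qd(z)$ with $(D\otimes I_p)w_z=\mathbf{0}$, the constant function is an admissible Krasowskii solution from any such point.
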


\begin{proof}
To analyze the system (\ref{closed.loop.z.xi}) we consider the
Lyapunov function
\[\ba{rcl}
V(z,\xi)&=& \underbrace{\dst\sum_{i=1}^N S_i(\xi_i)}_{S(\xi)}+
\underbrace{\dst\sum_{k=1}^M P_k(z_k)}_{P(z)}\\
&=& \sum_{i=1}^N S_i(\xi_i)+ \dst\sum_{k=1}^M \dst\sum_{j=1}^p
\dst\int_0^{z_{kj}} \qd(s) ds\;. \ea\]
%
The function
is a locally Lipschitz and regular function. In fact, each term
$\int_0^{z_{kj}} \qd(s) ds$
is convex and as such it
is regular (\cite[Proposition 2.3.6]{clarke.book},\cite{cortes.csm08}). Then the sums $P_k(z_k)$ and $\sum_{k=1}^M P_k(z_k)$
are also regular.
The function $V(z,\xi)$ is nonnegative and
vanishes on the set of points such that $\xi=\mathbf{0}$ and
$z_{kj}\in [-a, a ]$
for all $k\in \{1,2,\ldots,M\}$
and
all $j\in \{1,2,\ldots,p\}$. \\
In order to apply the LaSalle's invariance principle for the differential inclusions as given in Theorem \ref{diffLaSalletheorem} in Appendix \ref{a0},
we analyze the set-valued derivative $V$ with respect to (\ref{closed.loop.z.xi}) as follows. Define
\begin{equation*}
\dot{\overline V}(z,\xi)=\{a\in \R\,:\,  \exists w\in {\cal K}\tilde F(z,\xi)\;{\rm s.t.}\;\;a=\langle p,w\rangle, \; \;
 \forall p\in \partial V(z,\xi) \;\}\;.
\end{equation*}
where
$\langle \cdot,\cdot\rangle$ denotes the standard inner product
and
$\tilde F(z,\xi)$ the right-hand side of (\ref{closed.loop.z.xi}).
We first observe that by definition of $V(z,\xi)$, $\partial
V(z,\xi)$ and $\partial P(z)$, $p\in \partial V(z,\xi)$ implies
the existence of $p_z\in \partial P(z)$ such that
\[
p=\left(\ba{c} p_z
\\
\nabla S(\xi)
\ea\right).
\]
Moreover,
if $w\in {\cal K}\tilde F(z,\xi)$ then there exists $w_z\in {\cal K}\qd(z)$
(\cite{OH:79},\cite{paden.sastry.cs87}) such that
\[
w= \left(\ba{c} (D^T\otimes I_p) h(\xi)\\ f(\xi) \ea\right) +
\left(\ba{c} \mathbf{0}\\ g(\xi) \ea\right)(-D\otimes I_p)w_z.
\]
Let now $p\in \partial V(z,\xi)$ and $w\in {\cal K}\tilde
F(z,\xi)$  and write
\be\label{p.w}\ba{rcl}
\langle p,w\rangle &=& \langle\nabla S(\xi), f(\xi)+g(\xi)(-D\otimes I_p)w_z\rangle+\\[2mm]
&&
\hspace{3cm}\langle p_z,(D^T\otimes I_p) h(\xi)\rangle\\
&\le& - \dst\sum_{i=1}^N W_i(\xi_i)+\langle h(\xi),(-D\otimes I_p)w_z\rangle+\\[2mm]
&& \hspace{3cm}\langle p_z,(D^T\otimes I_p) h(\xi)\rangle, \ea\ee
where the inequality is a consequence of (\ref{passivity}).
Suppose now that for some $(z,\xi)$, $\dot{\overline V}(z,\xi)\ne
\emptyset$. Then, for every $a\in \dot{\overline V}(z,\xi)$
and for every $p\in\partial V(z,\xi)$, there exists $w\in {\cal K}\tilde
F(z,\xi)$ such that $a=\langle
p,w\rangle$. By definition of $\qd(z)$ and $P(z)$,
$\partial P(z)={\cal K}\qd(z)$
(\cite{OH:79},\cite{paden.sastry.cs87}). Then $a=\langle
p,w\rangle$ holds  in particular when
\[
p=\left(\ba{c} p_z
\\ \nabla S(\xi)
\ea\right)=\left(\ba{c} w_z
\\ \nabla S(\xi)
\ea\right).
\]
with $w_z\in {\cal K}\qd(z)$. Then (\ref{p.w}) becomes:
\[\ba{rcl}
\langle p,w\rangle &\le&  - \dst\sum_{i=1}^N W_i(\xi_i)+\langle h(\xi),(-D\otimes I_p)w_z\rangle+\\
&& \langle w_z,(D^T\otimes I_p) h(\xi)\rangle=- \dst\sum_{i=1}^N
W_i(\xi_i). \ea\] Hence, for all $(z,\xi)$ such that
$\dot{\overline V}(z,\xi)\ne \emptyset$, we have that
\be\label{sv.der} \dot{\overline V}(z,\xi)=\{a\in \R\,:\, a\le -
\dst\sum_{i=1}^N W_i(\xi_i)\}\;. \ee
Since $\frac{d}{dt} V(z(t),\xi(t))\in \dot{\overline
V}(\xi(t),z(t))\subseteq (-\infty, 0]$ for almost every $t$,
$V(z(t),\xi(t))$ cannot increase, and  any Krasowskii solution
$(z(t),\xi(t))$ is bounded. Hence,
$(z(t),\xi(t))$ exists for all $t$.\\
%
Given any initial condition $(z(0),\xi(0))$, the set ${\cal S}$ such
that $V(z,\xi)\le V(z(0),$ $\xi(0))$ is a strongly  invariant set
for (\ref{closed.loop.z.xi}) which contains the initial condition.
An application of Theorem \ref{diffLaSalletheorem} in Appendix \ref{a0} shows that any Krasowskii
solution converges to the largest weakly invariant set contained in
${\cal S}\cap \{(z,\xi)\,:\,\mathbf{0} \in \dot{\overline V}(z,\xi)\}$.
\\
Moreover, in view of (\ref{sv.der}), the set $Z$ of points
$(z,\xi)$ such that $0\in \dot{\overline V}(z,\xi)$ is
contained in the set of points such that $\xi=\mathbf{0}$. Hence,
any point of  the largest weakly invariant set contained in
${\cal S}\cap Z$
is such that $\xi=\mathbf{0}$. Pick a point
$(z,\mathbf{0})$
on this invariant set. Then in order for  a Krasowskii solution to
(\ref{closed.loop.z.xi}) starting from this point to remain in the
invariant set, it must be true that $\mathbf{0}\in
f(\mathbf{0})+g(\mathbf{0})(-(D\otimes
I_p){\cal K}\qd(z))=g(\mathbf{0})(-(D\otimes I_p){\cal K}\qd(z))$. Since the
matrix
$g(\mathbf{0})$ is full-column-rank  (recall that each $g_i(\mathbf{0})$ is full-column rank),
the inclusion above
requires the existence of $w_z\in {\cal K}\qd(z)$ such that
$(D\otimes I_p)w_z=\mathbf{0}$.
In other words, the largest weakly invariant set included in
${\cal S}\cap Z$
is contained in the set (\ref{set}). Finally,
observe that, taken any point in the set (\ref{set})
as initial condition for (\ref{closed.loop.z.xi}), at least
a Krasowskii solution $(z(t),\xi(t))$ originating from this point must coincide with the
trivial solution, i.e.\ $(z(t),\xi(t))=(\mathbf{0}, \mathbf{0})$ for all $t$.
Hence, any point in (\ref{set}) is a Krasowskii equilibrium for
(\ref{closed.loop.z.xi}).
%
\end{proof}

It is now possible to prove the following:
\begin{theorem}\label{theo.main}
Let Assumption \ref{assum.passivity} hold
and let the communication graph $G$ be undirected and connected.
Let $v:\R_{+}\to \R^p$
be a bounded and piecewise continuous
function and
$\Delta$
be a positive number.
Then any Krasowskii solution to
(\ref{closed.loop.x.xi2}) converges to the set
\be\label{target.set}
\{(x,\xi)
: \xi=\mathbf{0}\;,\;z\in ({\cal A}_1\times\ldots\times {\cal
A}_M), z=(D^T\otimes I_p)x\}, \ee where the sets ${\cal A}_k$'s
are defined in (\ref{A.k}), with
$a=\Delta/2$.
Moreover, $\lim_{t\to+\infty} [\dot x(t)-\mathbf{1}_N\otimes
v(t)]=\mathbf{0}$.
\end{theorem}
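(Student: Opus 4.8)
The plan is to deduce the theorem from Lemma \ref{lemma.main} by combining its conclusion with the structural constraint that, along any solution, the disagreement variable is realizable as $z=(D^T\otimes I_p)x$. First I would invoke Lemma \ref{lemma.main}: since $(z,\xi)=((D^T\otimes I_p)x,\xi)$ is a Krasowskii solution of (\ref{closed.loop.z.xi}) whenever $(x,\xi)$ solves (\ref{closed.loop.x.xi2}), every such $(z(t),\xi(t))$ is bounded and converges to the equilibrium set (\ref{set}). In particular $\xi(t)\to\mathbf{0}$, which immediately settles the velocity statement: from (\ref{closed.loop.x.xi2}) one has $\dot x-\mathbf{1}_N\otimes v=h(\xi)$, and continuity of $h$ together with $h(\mathbf{0})=\mathbf{0}$ gives $\lim_{t\to\infty}[\dot x(t)-\mathbf{1}_N\otimes v(t)]=\mathbf{0}$.

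The heart of the argument is to sharpen the $z$-part of (\ref{set}) to the target box, using realizability. Let $(z,\mathbf{0})$ be a point of (\ref{set}) with $z=(D^T\otimes I_p)x$ for some $x$; by definition there is $w_z\in{\cal K}\qd(z)$ with $(D\otimes I_p)w_z=\mathbf{0}$. I would then compute the inner product
\[
\langle z,w_z\rangle=\langle (D^T\otimes I_p)x,\,w_z\rangle=\langle x,\,(D\otimes I_p)w_z\rangle=0 .
\]
On the other hand, from the explicit form of ${\cal K}\qd$ each scalar component satisfies $z_{kj}\,(w_z)_{kj}\ge 0$, and in fact $z_{kj}\,(w_z)_{kj}>0$ whenever $|z_{kj}|>\Delta/2$ (for such $z_{kj}$ every element of ${\cal K}\qd(z_{kj})$ is nonzero with the same sign as $z_{kj}$). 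Since $\langle z,w_z\rangle=\sum_{k,j}z_{kj}(w_z)_{kj}=0$ is a sum of nonnegative terms, every term vanishes, which forces $|z_{kj}|\le\Delta/2$ for all $k,j$; that is, $z\in{\cal A}_1\times\ldots\times{\cal A}_M$ with $a=\Delta/2$. Hence the realizable part of (\ref{set}) is contained in the target set (\ref{target.set}).

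It remains to transfer the convergence from the equilibrium set to the target set. Because the solution is bounded, its $\omega$-limit set is nonempty, compact and, by Lemma \ref{lemma.main}, contained in (\ref{set}); since $z(t)$ remains in the closed subspace ${\rm range}(D^T\otimes I_p)$, the $\omega$-limit set consists of realizable points and is therefore contained in (\ref{target.set}) by the previous paragraph. Convergence of the bounded solution to its $\omega$-limit set then yields $z(t)\to{\cal A}_1\times\ldots\times{\cal A}_M$ and $\xi(t)\to\mathbf{0}$. The only mildly technical point is to express the $x$-distance to (\ref{target.set}) in terms of $\xi$ and the distance of $z$ to the box: this is handled by noting that a realizable $z$ lying $\varepsilon$-close to the box can be radially contracted by the factor $(1+2\varepsilon/\Delta)^{-1}$ into $\text{box}\,\cap\,{\rm range}(D^T\otimes I_p)$, so that a nearby point of (\ref{target.set}) always exists.

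The main obstacle is the sharpening step of the second paragraph: without the realizability relation $z=(D^T\otimes I_p)x$ the equilibrium set (\ref{set}) is genuinely larger than the box (on graphs with cycles one might fear configurations with $\mathbf{0}\in(D\otimes I_p){\cal K}\qd(z)$ but $z$ outside the box), and it is precisely the identity $\langle z,w_z\rangle=\langle x,(D\otimes I_p)w_z\rangle=0$ coupled with the sign-definiteness of the Krasowskii quantizer that rules these out. Everything else --- the velocity limit and the passage from the equilibrium set to the target set --- is routine given Lemma \ref{lemma.main}.
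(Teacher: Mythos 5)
Your proposal is correct and takes essentially the same route as the paper's own proof: invoke Lemma \ref{lemma.main}, then use the orthogonality $\langle z, w_z\rangle = \langle x, (D\otimes I_p)w_z\rangle = 0$ together with the sign structure of ${\cal K}\qd$ (componentwise products nonnegative, and strictly positive whenever $|z_{kj}|>\Delta/2$) to confine the realizable equilibria to ${\cal A}_1\times\ldots\times{\cal A}_M$, with the velocity claim following from $\xi(t)\to\mathbf{0}$ and $h(\mathbf{0})=\mathbf{0}$. The one step the paper makes explicit and you leave tacit is forward completeness of $(x(t),\xi(t))$ --- since $\dot x = h(\xi(t))+\mathbf{1}_N\otimes v(t)$ is bounded along the bounded $(z,\xi)$-solution, $x(t)$ grows at most linearly and the maximal interval of definition is $[0,+\infty)$ --- which your $\omega$-limit argument implicitly assumes.
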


\begin{proof}
Consider any Krasowskii solution $(x(t),\xi(t))$ to
(\ref{closed.loop.x.xi2}), whose existence is guaranteed locally.
It can also be extended for all $t\in [0, +\infty)$. In fact
suppose by contradiction this is not true, i.e.\ $(x(t),\xi(t))$
is defined on the interval $[0, t_f)$, with $t_f<+\infty$.
Define $(z(t),\xi(t))=((D^T\otimes I_p) x(t), \xi(t))$.
$(z(t),\xi(t))$ is a
Krasowskii solution to (\ref{closed.loop.z.xi}).
As proven before,
such a solution is bounded on its its domain of definition. Since
by (\ref{closed.loop.x.xi2}) $\dot x(t)=h(\xi(t))+v(t)$ and both
the terms on the right-hand side are bounded, then $x(t)$ grows
linearly in $t$ and therefore it must be bounded on the maximal
interval of definition, i.e.\ $t_f=+\infty$. Hence both
$(x(t),\xi(t))$ and $(z(t),\xi(t))=((D^T\otimes I_p) x(t),
\xi(t))$ are defined for all $t$. Moreover, by Lemma
\ref{lemma.main}, $z(t)=(D^T\otimes I_p) x(t)$ converges to the
set of points (\ref{set}), i.e.\ to
\be\label{set.x} \{(x,\xi)\,:\, \xi=\mathbf{0},\; \mathbf{0}\in
(D\otimes I_p){\cal K}\qd(z),\; z=(D^T\otimes I_p)x\}\;. \ee
%
Let $(x,\mathbf{0})$
belong to the set (\ref{set.x}). Then
$z=(D^T\otimes I_p)x$, i.e. $z$ belongs to the span of $D^T\otimes
I_p$ and there exists $w_z\in {\cal K}\qd(z)$ such that
$(D\otimes I_p)w_z=\mathbf{0}$. The two conditions  imply that
$\langle w_z,z\rangle =0$. We claim that then necessarily $z\in
{\cal A}_1\times\ldots\times {\cal A}_M$, with the sets ${\cal
A}_i$'s given in (\ref{A.k}). In fact, if this is not true, then
there must exist a pair of indices $j,k$ such that
$|z_{kj}|>a$.
This implies that the entry $k+j$ of the vector
$w_z$ is different from zero and also $w_{z,k+j}\cdot z_{kj}>0$.
Moreover, since $w_z\in {\cal K}\qd(z)$, for any pair of indices
$i, \ell$ such that $i\ne k$ or $\ell \ne j$, $w_{z, i+\ell}\cdot
z_{i\ell}\ge 0$. This contradicts that $\langle w_z,z\rangle=0$.
Then we have proven that the set (\ref{set.x}) is included in the
set
\be\label{set.x.new} \{(x,\xi)\,:\, \xi=\mathbf{0},\; z\in {\cal
A}_1\times\ldots\times {\cal A}_M,\; z=(D^T\otimes I_p)x\}\;. \ee
%
Hence, any Krasowskii  solution $( x(t),\xi(t))$ to
(\ref{closed.loop.x.xi2}) converges to
a subset of (\ref{set.x.new}).\\
As for the second part of the statement, any Krasowskii solution
to (\ref{closed.loop.x.xi2}) is such that $\dot
x(t)-\mathbf{1}_N\otimes v(t)=h(\xi(t))$, and since we have proven
that $\xi(t)\to \mathbf{0}$ as $t\to \infty$, we have also proven
that  $\lim_{t\to+\infty} [\dot x(t)-\mathbf{1}_N\otimes
v(t)]=\mathbf{0}$.
\end{proof}

\subsection{Examples}\label{ss.examples}
We provide three examples of application of the quantized agreement
result described above.

{\it Agreement of single integrators by quantized measurements.} We
specialize
 the proof of Theorem \ref{theo.main}  to the agreement problem for single integrators.
 This problem stems from the case when in (\ref{closed.loop.x.xi2}) the mapping from $u=-(D\otimes I_p)\qd(z)$
 to $y=h(\xi)$ is an identity operator.
The closed-loop system (\ref{closed.loop.x.xi2}) reduces to:
\be\label{single.integrator} \ba{rcl}
\dot x&=& -(D\otimes I_p) \qd(z)\\
z&=&(D^T\otimes I_p) x \ea\ee 
which using the variables $z$
becomes 
\be\label{single.integrator.z} \dot z=-(D^T\otimes
I_p)(D\otimes I_p) \qd(z)=-(D^T D\otimes I_p)\qd(z) \ee 
We analyze
this system using the function  $P(z)$ introduced above. 
Since $\partial P(z)={\cal K}\qd(z)$, then for all $z$ such that
$\dot{\overline{P}}(z)\ne \emptyset$, we have
\[
\dot{\overline{P}}(z)=\{a\in \R: \exists w_z\in {\cal K}\qd(z) \;{\rm s.t.}\;a=-||(D\otimes I_p)w_z||^2\}.
\]
Hence, arguments as in Lemma \ref{lemma.main} give that all
the Krasowskii solutions to (\ref{single.integrator.z}) converge
to the set of points
$\{z\,:\, \mathbf{0}\in (D\otimes I_p){\cal K}\qd(z)\}$.
On the other hand, by Theorem \ref{theo.main}, any
Krasowskii solution  $x(t)$ to (\ref{single.integrator}) is such
that $z(t)=(D^T\otimes I_p) x(t)$ converges to
$\{x\,:\,\mathbf{0}\in (D\otimes I_p){\cal K}\qd(z),\; z=(D^T\otimes I_p)
x\}$
which is included in the set $\{z\,:\, z\in {\cal
A}_1\times\ldots\times {\cal A}_p,\; z=(D^T\otimes I_p) x\}$.\\ 
Let
$x$ be any Krasowskii solution to (\ref{single.integrator}) with
$z=(D^T\otimes I_p)x$. Take any two variables $x_i, x_j$ whose
agents are connected by the edge $k$. Consider for the sake of
simplicity that each quantizer has the same parameter $\Delta$.
%
%
Then  $z_k=x_i-x_j$ converges asymptotically to a square of the
origin whose edge is not longer than $\Delta$. If the agents are
not connected by an edge but by a path, then each entry of
$x_i-x_j$ is in magnitude bounded by $\Delta \cdot d$, with $d$
the diameter of the graph.
The result can be compared with Theorem 4 in
\cite{dimarogonas.johansson.aut10}. One difference is that, while
trees are considered in \cite{dimarogonas.johansson.aut10},
connected graphs are considered here. Moreover, in
\cite{dimarogonas.johansson.aut10} the scalar states are
guaranteed to converge to a ball of radius $\frac{||D^T D||
\sqrt{M}}{\lambda_{min}(D^T D)}\Delta$. Hence, denoted by $\rho$
the ratio $\frac{||D^T D||}{\lambda_{min}(D^T D)}$  and considered
the bound $M\le N-1$, any two states $x_i, x_j$ may differ for
$2\rho \Delta \sqrt{N-1}$.
The passivity approach considered here
yields that they differ for not more than $d\cdot\Delta$, where
$d$ grows as $O(\rho \log(N))$ (\cite{chung.ejc06}) for not
complete and regular graphs (graphs with all the nodes having the
same degree), 
thus leading to a smaller region of convergence, the quantizer resolution $\Delta$ being the same. 
%
%

{\it Agreement of double integrators by quantized measurements}
Consider the case of $N$ agents modeled as
\be\label{double.integrator} \ddot x_i=f_i\;,\quad i=1,2,\ldots,N,
\ee with $x_i, f_i\in \R^2$, for which we want to solve the
agreement problem with quantized measurements. This means that all
the agents should practically converge towards the same position
and also asymptotically evolve with the same velocity $v$. The preliminary
feedback (\cite{arcak.tac07}) \be\label{preliminary.feedback}
f_i=-K_i(\dot x_i-v)+\dot v+u_i\;,\quad K_i=K_i^T, \ee with $u_i$
to design, and the change of variables $\xi_i=\dot x_i-v$,
 makes the closed-loop system
\[\ba{rcl}
\dot x_i &=& \xi_i+v\\
\dot \xi_i &=& -K_i \xi_i+u_i\\
y_i &=& \xi_i \ea\] passive with storage function
$S_i(\xi_i)=\frac{1}{2} \xi_i^T \xi_i$ and $W_i(\xi_i)=-K_i
\xi_i^T \xi_i$. The system above is in the form
(\ref{feedb.path}). Theorem \ref{theo.main} guarantees that the
Krasowskii solutions of (\ref{double.integrator}),
(\ref{preliminary.feedback}), (\ref{control.by.quantized.meas})
%
%
converges asymptotically to the set (\ref{target.set}) and that
all the agents' velocities converge to $v$.
In other words, the
formation achieves practical position agreement and convergence to
the prescribed velocity.
%
%
\begin{remark}[Consensus for double integrators with
velocity feedback]\label{rem.double}
A different but related consensus problem consists of
designing local controllers in such a way that each double integrator
converges asymptotically to the same position and velocity. In this case,
no external reference velocity is provided and
the velocity to which all the systems converge is the average of the initial
velocities (\cite{tanner.et.al.cdc03.I}). The controller which guarantees this coordination task uses
both position and velocity feedback (observe that the communication
graphs for the position measurements and for the velocity measurements
can be different). It then makes sense to consider
the problem in the presence of quantized relative position and velocity
measurements.
This has been investigated in \cite{liu.cao.ifac11}.
%
\end{remark}


{\em The case of unknown reference velocity.} If the reference velocity $v$ is not available to all the
agents, then \cite{bai.et.al.scl08, bai.et.al.aut09} suggest to
replace it with an estimate which is generated by each agent on
the basis of the current available measurements. Here we examine
this control scheme when the measurements are quantized. We
consider the special case in which the unknown reference velocity
is constant. Then each agent $i$, with
the exception of one which acts as a leader and can access the
prescribed reference velocity $v$, use an estimated version of
$v$, namely $\hat v_i$ has to be generated on-line starting from the
available local measurements. The agent's dynamics
(\ref{feedb.path}) becomes
\be\label{feedb.path.est} \ba{rcll}
\dot x_i &=& y_i+\hat v_i\\
\dot \xi_i &=& f_i(\xi_i)+g_i(\xi_i) u_i\\
y_i &=& h_i(\xi_i), & i=1,2,\ldots, N, \ea\ee
with $\hat v_i=v$ if $i=1$ (without loss of generality agent
$1$ is taken as the leader), and otherwise  generated by
\[
\dot{\hat v}_i = \Lambda_i  u_i
\]
with $\Lambda_i=\Lambda_i^T>0$ and $u_i$ as in
(\ref{arcak.control}). Observe that in this case, the
estimated velocity is updated via quantized measurements.
Consider the closed-loop system
\be\label{closed.loop.x.xi.theta}
\ba{rcl}
\dot x&=& h(\xi) +
\mathbf{1}_N\otimes \hat
v
\\
\dot \xi&=& f(\xi) -
g(\xi)(D\otimes I_p)\qd(z)
\\
\dot{\hat v} &=& -\Lambda (D\otimes I_p)\qd(z)
\ea\ee
where $\Lambda={\rm diag}(\Lambda_1,\ldots,\Lambda_N)$ and
$z=(D^T\otimes I_p) x$.
Let
\[
\hat v_i(t)=v+\hat v_i(t)-v= v+(\hat v_i(t)-v)=:v+\tilde v_i(t)
\]
where $\tilde v_1=\mathbf{0}$.
Rewrite the system using the
coordinates $z$ and $\tilde \theta$ and obtain
\be\label{closed.loop.x.xi.tilde.theta} \ba{rcl} \dot z&=&
(D^T\otimes I_p)[h(\xi) + \tilde
v]
\\
\dot \xi&=& f(\xi) -
g(\xi)(D\otimes I_p)\qd(z)\\
\dot{\tilde v } &=& \Lambda
(D\otimes I_p)\qd(z) \ea\ee
where in the second equation it was exploited again the fact that $(D^T\otimes
I_p)\mathbf{1}_N=\mathbf{0}$.
\\
One can now proceed as in the proof of Lemma \ref{lemma.main}.
Consider the Lyapunov function
\[\ba{rcl}
V(z,\xi,\tilde v )&=&
S(\xi)+P(z)+\dst\frac{1}{2}\tilde v ^T\Lambda^{-1}\tilde v
\ea\]
and let $\tilde F(z,\xi,\tilde v )$ be the right-hand
side of (\ref{closed.loop.x.xi.tilde.theta}). For any $p\in
\partial V(z,\xi,\tilde v )$ and $w\in {\cal K}\tilde
F(z,\xi,\tilde v )$  consider
\be\label{p.w2}\ba{rcl}
\langle p,w\rangle &=& \langle\nabla S(\xi), f(\xi)+g(\xi)(-D\otimes I_p)w_z\rangle+\\[2mm]
&&
\langle p_z,(D^T\otimes I_p) [h(\xi)+\tilde v ]\rangle
+\\[2mm]
&&\langle \Lambda^{-1}\tilde v , -\Lambda (D\otimes I_p)w_z\rangle\\
\ea\ee
where
\[
p=\left(\ba{c} p_z
\\ \nabla S(\xi)\\
\Lambda^{-1}\tilde v  \ea\right)
\]
and
\[
w= \left(\ba{c} (D^T\otimes I_p)[h(\xi)+\tilde v ]\\
 f(\xi)\\ \mathbf{0}
\ea\right) + \left(\ba{c}
 \mathbf{0}\\ g(\xi)\\
-\Lambda \ea\right)(-D\otimes I_p)w_z.
\]
As in Lemma \ref{lemma.main} one proves that
$\langle p,w\rangle \le - \sum_{i=1}^N W_i(\xi_i)$
and therefore that
\be\label{sv.der2} \dot{\overline V}(z,\xi,\tilde v )=\{a\in
\R\,:\, a\le - \dst\sum_{i=1}^N W_i(\xi_i)\}\;. \ee
Hence, any Krasowskii solution $( z(t),\xi(t), \tilde v(t))$
is bounded and exists for all $t$. Let
${\cal S}$
be the level set such that
$V(z,\xi, \tilde v)\le V(z(0),\xi(0), \tilde v(0))$ and $Z$
the set of points $( z,\xi,\tilde v)$ such that $\mathbf{0}\in
\dot{\overline V}(z,\xi,\tilde v )$.
%
%
Then any solution $(z,\xi,\tilde v )$
converges to the largest weakly invariant subset contained in
${\cal S}\cap Z$.
Observe that $Z\subset \{( z,\xi,\tilde v):\xi=0\}$. Moreover, for a set in
${\cal S}\cap Z$
to be weakly invariant, it must be true that
$\mathbf{0}\in {\cal K}\tilde F(z,\xi,\tilde v )$ with $\tilde F(z,\xi,\tilde v )$
the right-hand
side of (\ref{closed.loop.x.xi.tilde.theta}). These two facts together
imply that there must exist
$w_z\in {\cal K}\qd(z)$
such that $(D\otimes I_p)w_z=\mathbf{0}$
and additionally $(D^T\otimes I_p)\tilde v=\mathbf{0}$. The
latter implies that $\tilde v=(\mathbf{1}_N\otimes I_p)c$ for some $c\in
\R$. Since $\tilde v_1=\mathbf{0}$, then on the largest weakly
invariant set contained in $S\cap Z$ it is also true that $\tilde v =\mathbf{0}$.
Hence it
follows that any Krasowskii solution to
(\ref{closed.loop.x.xi.tilde.theta}) converges to the set
\be\label{set.theta} \{( z, \xi,\tilde v)\,:\, \xi={\mathbf 0},\; {\mathbf
0}\in (D\otimes I_p){\cal K}\qd(z),\; \tilde v ={\mathbf
0}\} \;. \ee
Note that each point in the set is a  Krasowskii equilibria of
(\ref{closed.loop.x.xi.tilde.theta}).\\
One can then focus on the system (\ref{closed.loop.x.xi.theta})
and follow the same arguments of Theorem \ref{theo.main} to conclude
that the solutions of the closed-loop system converge to the set
where all the systems evolve with the same velocity, achieve
practical consensus on the position variable and the estimated velocities
$\hat v_i$ converge to the true reference velocity $v$.
\begin{proposition}
Let Assumption \ref{assum.passivity} hold and let the communication graph $G$ be undirected and connected.
Let $v\in \R^p$ be a constant vector and
$\Delta$ a positive number.
Then any Krasowskii solution to
(\ref{closed.loop.x.xi.theta}) converges to the set
\be
\{(x,\xi,\hat v)
: \xi=\mathbf{0},z\in ({\cal A}_1\times\ldots\times {\cal
A}_M), z=(D^T\otimes I_p)x, \hat v=\mathbf{1}_N\otimes v\}, \ee
where the sets ${\cal A}_k$'s
are defined in (\ref{A.k}), with
$a=\Delta/2$.
In particular, $\lim_{t\to+\infty} [\dot x(t)-\mathbf{1}_N\otimes
v]=\mathbf{0}$.

\end{proposition}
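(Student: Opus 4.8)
The plan is to recast the analysis sketched above into the template of Lemma~\ref{lemma.main} and Theorem~\ref{theo.main}, the only genuinely new ingredient being the estimator state. First I would pass to the error coordinate $\tilde v=\hat v-\mathbf{1}_N\otimes v$, recording that $\tilde v_1=\mathbf{0}$ because agent~$1$ is the leader and already knows $v$, and rewrite the closed loop in the $(z,\xi,\tilde v)$ form (\ref{closed.loop.x.xi.tilde.theta}); the identity $(D^T\otimes I_p)\mathbf{1}_N=\mathbf{0}$ is used to remove the common term $\mathbf{1}_N\otimes v$ from the $\dot z$ equation. The Lyapunov candidate is the augmented, locally Lipschitz and regular function $V(z,\xi,\tilde v)=S(\xi)+P(z)+\tfrac12\tilde v^T\Lambda^{-1}\tilde v$, whose regularity follows from that of $P$ (as in Lemma~\ref{lemma.main}) together with the smoothness of the quadratic term.

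The second step is the set-valued derivative, computed exactly as in Lemma~\ref{lemma.main}. Writing $\langle p,w\rangle$ for $p\in\partial V$ and $w\in{\cal K}\tilde F$ as in (\ref{p.w2}), the decisive cancellation is that the feedforward of $\tilde v$ into $\dot z$ and the update $\dot{\tilde v}=\Lambda(D\otimes I_p)\qd(z)$ produce opposite cross terms, so that upon selecting $p_z=w_z\in{\cal K}\qd(z)$ only $-\sum_{i=1}^N W_i(\xi_i)$ survives, yielding (\ref{sv.der2}). Boundedness and global existence of the Krasowskii solutions follow, and the nonsmooth LaSalle principle (Theorem~\ref{diffLaSalletheorem}) gives convergence to the largest weakly invariant set in ${\cal S}\cap Z$, where $Z\subseteq\{\xi=\mathbf{0}\}$.

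The crux, and the step I expect to be the main obstacle, is the characterization of this invariant set. On $\xi=\mathbf{0}$, weak invariance requires the existence of an equilibrium selection $\mathbf{0}\in{\cal K}\tilde F$: its $\dot z$ component forces $(D^T\otimes I_p)\tilde v=\mathbf{0}$, while its $\dot\xi$ component, combined with the full column rank of $g(\mathbf{0})$, produces some $w_z\in{\cal K}\qd(z)$ with $(D\otimes I_p)w_z=\mathbf{0}$. Connectedness of $G$ gives $\ker(D^T)=\operatorname{span}(\mathbf{1}_N)$, so $(D^T\otimes I_p)\tilde v=\mathbf{0}$ forces $\tilde v=\mathbf{1}_N\otimes c$ for some $c\in\R^p$; the leader constraint $\tilde v_1=\mathbf{0}$ then forces $c=\mathbf{0}$, so $\tilde v=\mathbf{0}$, i.e.\ $\hat v=\mathbf{1}_N\otimes v$. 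Finally the inner-product argument of Theorem~\ref{theo.main} applies unchanged: $(D\otimes I_p)w_z=\mathbf{0}$ with $z\in\operatorname{span}(D^T\otimes I_p)$ gives $\langle w_z,z\rangle=0$, and since each component product $w_{z,kj}\,z_{kj}\ge0$ by definition of ${\cal K}\qd$, this equality forces $|z_{kj}|\le a=\Delta/2$ for all $k,j$, that is $z\in{\cal A}_1\times\cdots\times{\cal A}_M$.

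It then remains to transfer these conclusions from the $(z,\xi,\tilde v)$ representation back to (\ref{closed.loop.x.xi.theta}), exactly as one passes from Lemma~\ref{lemma.main} to Theorem~\ref{theo.main} (using that $\dot x$ is bounded, so $x$ grows at most linearly and is defined for all $t$), and to read off the velocity statement: since $\dot x-\mathbf{1}_N\otimes v=h(\xi)+\tilde v$ with $h(\mathbf{0})=\mathbf{0}$, the convergences $\xi(t)\to\mathbf{0}$ and $\tilde v(t)\to\mathbf{0}$ yield $\lim_{t\to+\infty}[\dot x(t)-\mathbf{1}_N\otimes v]=\mathbf{0}$.
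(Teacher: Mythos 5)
Your proposal is correct and follows essentially the same path as the paper's own argument: the change of coordinates $\tilde v=\hat v-\mathbf{1}_N\otimes v$ with the leader constraint $\tilde v_1=\mathbf{0}$, the augmented Lyapunov function $S(\xi)+P(z)+\tfrac12\tilde v^T\Lambda^{-1}\tilde v$ with the cross-term cancellation under the selection $p_z=w_z\in{\cal K}\qd(z)$, the nonsmooth LaSalle step yielding $Z\subseteq\{\xi=\mathbf{0}\}$, the invariance analysis giving $(D\otimes I_p)w_z=\mathbf{0}$ via the full column rank of $g(\mathbf{0})$ and $\tilde v\in\ker(D^T\otimes I_p)=\operatorname{span}(\mathbf{1}_N)\otimes\R^p$ killed by the leader, and the transfer back to (\ref{closed.loop.x.xi.theta}) via the inner-product argument of Theorem \ref{theo.main}. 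One cosmetic remark: for the cancellation you invoke one needs $\dot{\tilde v}=-\Lambda(D\otimes I_p)\qd(z)$, as follows from (\ref{closed.loop.x.xi.theta}) with $v$ constant --- the positive sign you wrote reproduces a sign slip present in the paper's own display (\ref{closed.loop.x.xi.tilde.theta}), and the rest of your computation uses the correct sign.
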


\begin{remark}{\bf (Velocity error
feedback)}
Instead of the control law (\ref{arcak.control.compact})
$u=-(D\otimes I_p)\psi(z)$, the control law  proposed in
\cite{bai.et.al.aut09} considers an additional velocity error
injection (namely,  $\sum_{j\in {\cal
N}(i)}(\dot x_j-\dot x_i)$, with ${\cal N}(i)$ the set of
neighbors with respect to which the agent $i$ can measure the
relative velocity). This modified control law guarantees velocity tracking (with
time-varying reference velocity) and agreement of the variables
$x$ without relying on the convergence of the estimated velocity
to the actual value. However,
the use of this additional
velocity feedback term in the presence of quantization poses
a few additional challenges which are not tackled in this paper.
See also Remark \ref{rem.double} for more comments in this respect.
\end{remark}

\section{Quantized synchronization of passive systems} \label{quant.synchro}


We turn now our attention to the systems in (\ref{lti}) where the
control law that we consider is a static quantized output-feedback
control law of the form
\be\label{control} u=-(D\otimes
I_p)\qd(z)\;\mbox{with}\; z = (D^T \otimes I_p)w. \ee The overall
closed-loop
system is 
\be\label{cl}
\ba{rcl}
\dot \xi &=& (I_N\otimes A)\xi-(I_N\otimes B)(D\otimes I_p)\qd(z)\\
z &=& (D^T\otimes I_p)w=(D^T\otimes I_p)(I_N\otimes C)\xi.
\ea\ee
Applications where synchronization problems under communication
constraints and passivity are relevant are reviewed in \cite{fradkov.et.al.cs1.09}. Later in
this section, we briefly discuss another example where the use
of  quantized measurements for synchronization can be useful.

To study the robustness of the synchronization algorithm to
quantized measurements we need a more explicit characterization of
the exponential stability of (\ref{A.tilde}). To this purpose we
introduce a different Lyapunov function which is characterized in the
following lemma. As we consider time-invariant graphs, observability can be
replaced by a detectability assumption.
\begin{lemma}\label{firstlemma}
Let $(C,A)$ be detectable and $\Pi=I_N-
\frac{\mathbf{1}_N\mathbf{1}_N^T}{N}$. The integral
\[
R:=\dst\int_{0}^{+\infty} (\Pi \otimes I_n)^T {\rm e}^{\tilde A^T
s} {\rm e}^{\tilde A s} (\Pi \otimes I_n) ds,
\]
with $\tilde A$ as in (\ref{A.tilde}),
is finite
and satisfies
\be\label{R.bound}
||R||\le \dst\int_{0}^{+\infty}
\left\|\left(
\ba{ccc}
\exp(A-\lambda_2 BC)s & \ldots & \mathbf{0}_{n\times n}\\
\vdots & \ddots & \vdots \\
\mathbf{0}_{n\times n} & \ldots & \exp(A-\lambda_N BC)s
\ea\right)
\right\|^2
ds.
\ee
Moreover, the Lyapunov function
\[
U(\tilde \xi)=\tilde \xi^T R \tilde \xi
\]
satisfies the following:
\be\label{c1c2}
\ba{c}
c_1||\tilde \xi||^2\le U(\tilde \xi)\le c_2||\tilde \xi||^2\\[2mm]
\nabla U(\tilde \xi)\cdot\tilde A\tilde \xi\le -
||\tilde \xi||^2
\ea\ee
for each $\tilde \xi\in \R^{nN}$.
\end{lemma}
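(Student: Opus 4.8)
The plan is to block-diagonalize $\tilde A$ using the Kronecker structure, reduce every claim to a family of decoupled Hurwitz matrices, and then transport the standard Lyapunov-integral identities back through an orthogonal change of basis. First I would rewrite $\tilde A$ in (\ref{A.tilde}) in the compact form $\tilde A = I_N\otimes A - L\otimes(BC)$ with $L=DD^T$ the graph Laplacian, using the mixed-product rule for Kronecker products. Since $G$ is connected, $L=L^T$ is positive semidefinite and there is an orthogonal $T$ (with first column $\mathbf{1}_N/\sqrt{N}$) such that $T^T L T=\operatorname{diag}(0,\lambda_2,\dots,\lambda_N)$, $0<\lambda_2\le\cdots\le\lambda_N$. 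Conjugating by the orthogonal matrix $T\otimes I_n$ gives $(T\otimes I_n)^T\tilde A(T\otimes I_n)=\operatorname{diag}(A,\,A-\lambda_2 BC,\dots,A-\lambda_N BC)$. Under Assumption \ref{a.passivity} and detectability each mode $A-\lambda_i BC$ with $i\ge 2$ is Hurwitz: from $PB=C^T$ one obtains $P(A-\lambda_i BC)+(A-\lambda_i BC)^T P\le -2\lambda_i C^T C$, and since detectability is preserved under the output injection $A\mapsto A-\lambda_i BC$, a LaSalle argument makes $A-\lambda_i BC$ Hurwitz for $\lambda_i>0$; this is exactly the exponential stability of (\ref{A.tilde}) already recorded.

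Because $\mathbf{1}_N/\sqrt{N}$ is the first column of $T$, we have $T^T\Pi T=\operatorname{diag}(0,I_{N-1})$, so in the new basis $\Pi\otimes I_n$ becomes $\operatorname{diag}(0,I_n,\dots,I_n)$, which annihilates the only unstable ($\lambda_1=0$) block. Conjugating the integrand of $R$ then shows $R=(T\otimes I_n)\operatorname{diag}(0,R_2,\dots,R_N)(T\otimes I_n)^T$, where $R_i=\int_0^\infty e^{(A-\lambda_i BC)^T s}e^{(A-\lambda_i BC)s}\,ds$. Each $R_i$ is finite since $A-\lambda_i BC$ is Hurwitz, which proves finiteness of $R$. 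The bound (\ref{R.bound}) is then immediate: by the triangle inequality $\|R\|\le\int_0^\infty\|(\Pi\otimes I_n)^T e^{\tilde A^T s}e^{\tilde A s}(\Pi\otimes I_n)\|\,ds$, and by orthogonal invariance of the spectral norm the integrand has the same norm as $\operatorname{diag}(0,e^{(A-\lambda_2 BC)^T s}e^{(A-\lambda_2 BC)s},\dots)$, which equals the square of the norm of the block-diagonal matrix appearing on the right of (\ref{R.bound}).

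For the Lyapunov estimates I would first note that each Hurwitz block gives $R_i\succ 0$, since $R_i$ solves $(A-\lambda_i BC)^T R_i+R_i(A-\lambda_i BC)=-I_n$; hence $R$ is positive definite on the disagreement subspace $\operatorname{range}(\Pi\otimes I_n)$, and taking $c_1=\min_{i\ge 2}\lambda_{\min}(R_i)$ and $c_2=\|R\|$ yields the quadratic sandwich for every disagreement vector $\tilde\xi=(\Pi\otimes I_n)\tilde\xi$. For the derivative, the crucial observation is that $\Pi$ and $L$ commute (both respect the $\mathbf{1}_N$/$\mathbf{1}_N^\perp$ splitting), so $\Pi\otimes I_n$ commutes with $\tilde A$ and, being symmetric, with $\tilde A^T$; combined with the per-block Lyapunov equations this transports back to the single identity $\tilde A^T R+R\tilde A=-(\Pi\otimes I_n)$. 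Consequently $\nabla U(\tilde\xi)\cdot\tilde A\tilde\xi=\tilde\xi^T(\tilde A^T R+R\tilde A)\tilde\xi=-\|(\Pi\otimes I_n)\tilde\xi\|^2=-\|\tilde\xi\|^2$ on the disagreement subspace.

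The main obstacle I anticipate is conceptual rather than computational: $R$ is only positive semidefinite on all of $\R^{nN}$ (it carries the $\lambda_1=0$ kernel block), so the lower bound $c_1\|\tilde\xi\|^2\le U(\tilde\xi)$ and the identity $\dot U=-\|\tilde\xi\|^2$ hold only on $\operatorname{range}(\Pi\otimes I_n)$. This is harmless, since $\tilde\xi$ is by construction a disagreement vector, but it must be stated carefully. The second delicate point is justifying $\Pi L=L\Pi$ and the ensuing commutation of $\Pi\otimes I_n$ with $e^{\tilde A^T s}$, which is precisely what produces the clean Lyapunov identity $\tilde A^T R+R\tilde A=-(\Pi\otimes I_n)$ used in the derivative estimate.
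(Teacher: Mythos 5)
Your proof is correct, and its first half is essentially the paper's own argument: the paper likewise conjugates by $T\otimes I_n$ (with first column $\mathbf{1}_N/\sqrt{N}$, diagonalizing $L=DD^T$), reduces $\tilde A$ to the blocks $A,\,A-\lambda_2 BC,\dots,A-\lambda_N BC$, notes that passivity plus detectability makes the $i\ge 2$ blocks Hurwitz (you supply the standard output-injection/LaSalle detail that the paper merely asserts), and deduces finiteness of $R$ and the bound (\ref{R.bound}) by norm submultiplicativity. Where you genuinely diverge is in the Lyapunov estimates (\ref{c1c2}): the paper represents $U(\tilde\xi)$ as the trajectory integral $\int_t^{+\infty}\|\tilde\xi(\tau;\tilde\xi,t)\|^2\,d\tau$, obtains $c_1,c_2$ from converse-Lyapunov-theorem arguments (Khalil, Theorem 4.12), and gets the derivative by evaluating $\big[\tilde\xi^T(\Pi\otimes I_n)^T e^{\tilde A^T s}e^{\tilde A s}(\Pi\otimes I_n)\tilde\xi\big]_{s=0}^{s=+\infty}$; you instead solve an algebraic Lyapunov equation per stable block and transport back (via $\Pi L=L\Pi=L$, hence $(\Pi\otimes I_n)\tilde A=\tilde A(\Pi\otimes I_n)$) to the exact matrix identity $\tilde A^T R+R\tilde A=-(\Pi\otimes I_n)$, which yields the derivative claim as an identity and the explicit constants $c_1=\min_{i\ge2}\lambda_{\min}(R_i)$, $c_2=\|R\|$. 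Your algebraic route buys explicit constants and removes the need to cite a converse theorem; the paper's trajectory route is the one that would survive a time-varying $\tilde A$, which matters for its source reference to switching graphs. Finally, your caveat that the lower bound and the identity $\dot U=-\|\tilde\xi\|^2$ hold only on $\operatorname{range}(\Pi\otimes I_n)$ is well taken, and in fact more careful than the lemma's phrasing ``for each $\tilde\xi\in\R^{nN}$'': for $\tilde\xi=\mathbf{1}_N\otimes\zeta\ne\mathbf{0}$ one has $U(\tilde\xi)=0$, so both bounds fail off the disagreement subspace; the paper's proof implicitly works there as well (it uses $(\Pi\otimes I_n)\tilde\xi=\tilde\xi$ when identifying $U$ with the trajectory integral), and that is the only case needed in Theorem \ref{th.syncro}.
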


\begin{proof}
The proof is given in the Appendix \ref{a1}.
\end{proof}

The first fact we prove about (\ref{cl}) is that the control law (\ref{control}) achieves practical
synchronization of the outputs:
\begin{proposition}
Let Assumption \ref{a.passivity} hold
and let the communication graph $G$ be undirected and connected. Then
any Krasowskii solution to (\ref{cl}) converges to the largest weakly invariant
subset contained in
\be\label{lwis}
\{\xi\in R^{nN}: |z_{kj}|\le \frac{\Delta}{2},\forall\;k=1,2,\ldots,M,\; j=1,2,\ldots p\},
\ee
with $z=(D^T\otimes I_p)(I_N\otimes C)\xi$.
\end{proposition}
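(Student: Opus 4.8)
The plan is to reproduce, in the linear output-feedback setting of (\ref{cl}), the nonsmooth LaSalle argument used for Lemma~\ref{lemma.main}, taking as Lyapunov function the sum of the passive storage functions, i.e.
\be
V(\xi)=\sum_{i=1}^N\tfrac12\xi_i^TP\xi_i=\tfrac12\xi^T(I_N\otimes P)\xi,
\ee
with $P$ the matrix of Assumption~\ref{a.passivity}. This $V$ is $C^1$ (hence regular), positive definite and radially unbounded. Once I have shown that $V$ does not increase along Krasowskii solutions, every solution will lie in a compact sublevel set, will therefore be defined for all $t\ge0$ and be precompact, and Theorem~\ref{diffLaSalletheorem} will apply. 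Note that, contrary to the coordination problem, no separate potential in $z$ is required here, because $z=(D^T\otimes C)\xi$ is a linear function of the state and is automatically bounded once $\xi$ is bounded.

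First I would compute the set-valued derivative $\dot{\overline V}$ of $V$ along the differential inclusion associated with (\ref{cl}). Since $V$ is $C^1$, $\partial V(\xi)=\{(I_N\otimes P)\xi\}$ is a singleton, so $\dot{\overline V}(\xi)$ is simply the image of ${\cal K}\qd(z)$ under a single affine map of $w_z$: for each $w_z\in{\cal K}\qd(z)$ the associated element is
\be
\langle(I_N\otimes P)\xi,\,(I_N\otimes A)\xi-(I_N\otimes B)(D\otimes I_p)w_z\rangle=-W(\xi)-z^Tw_z,
\ee
where $W(\xi)=-\tfrac12\xi^T(I_N\otimes(A^TP+PA))\xi$. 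Here I use both identities of Assumption~\ref{a.passivity}: $A^TP+PA\le0$ makes $W\ge0$, while $B^TP=C$ (equivalently $PB=C^T$) turns the cross term into $z^Tw_z$, via $(I_N\otimes PB)(D\otimes I_p)=D\otimes C^T$ and $z=(D^T\otimes C)\xi$. This is precisely the cancellation that, in Lemma~\ref{lemma.main}, was provided by the passivity inequality (\ref{passivity}).

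The crucial step is the sign analysis of the quantization term. From the definition (\ref{scalar.quantizer}) of $\qd$ and of ${\cal K}\qd$, every $w_z\in{\cal K}\qd(z)$ satisfies the componentwise inequality $z_{kj}w_{z,kj}\ge0$, so $z^Tw_z\ge0$ and every element of $\dot{\overline V}(\xi)$ is $\le0$. Moreover $z_{kj}w_{z,kj}=0$ is attainable (by the choice $w_{z,kj}=0$) if and only if $\mathbf 0\in{\cal K}\qd(z_{kj})$, that is, if and only if $|z_{kj}|\le\Delta/2$; when $|z_{kj}|>\Delta/2$ the set ${\cal K}\qd(z_{kj})$ inherits the strict sign of $z_{kj}$ and $z_{kj}w_{z,kj}>0$ for every admissible choice. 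Hence $\mathbf 0\in\dot{\overline V}(\xi)$ forces simultaneously $W(\xi)=0$ and $|z_{kj}|\le\Delta/2$ for all $k,j$, so that $\{\xi:\mathbf 0\in\dot{\overline V}(\xi)\}$ is contained in the set (\ref{lwis}).

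Finally I would invoke the nonsmooth LaSalle principle, Theorem~\ref{diffLaSalletheorem}: on the compact sublevel set ${\cal S}=\{\xi:V(\xi)\le V(\xi(0))\}$ every Krasowskii solution converges to the largest weakly invariant set ${\cal M}$ contained in ${\cal S}\cap\{\xi:\mathbf 0\in\dot{\overline V}(\xi)\}$. By the previous step ${\cal M}$ lies in the set (\ref{lwis}); being itself weakly invariant, ${\cal M}$ is then contained in the largest weakly invariant subset ${\cal M}^\ast$ of (\ref{lwis}), and convergence to ${\cal M}\subseteq{\cal M}^\ast$ is a fortiori convergence to ${\cal M}^\ast$, which is the claim. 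The main obstacle I anticipate is essentially bookkeeping: carefully verifying the componentwise sign property of ${\cal K}\qd$ together with the regularity and chain-rule hypotheses needed to apply the set-valued LaSalle theorem. The extra constraint $W(\xi)=0$ only shrinks $\{\xi:\mathbf 0\in\dot{\overline V}(\xi)\}$ and is harmless, since the statement asks only for convergence to the largest weakly invariant subset of (\ref{lwis}), not for equality with it.
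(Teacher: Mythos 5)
Your proposal is correct and follows essentially the same route as the paper's proof: the quadratic storage function $V(\xi)=\xi^T(I_N\otimes P)\xi$ (yours differs only by the factor $\tfrac12$), the passivity identities $A^TP+PA\le 0$ and $B^TP=C$ yielding $\dot V\le -2z^T\nu\le 0$ for every $\nu\in{\cal K}\qd(z)$, and the nonsmooth LaSalle principle combined with the componentwise observation that $z_{kj}\nu_{kj}=0$ with $\nu_{kj}\in{\cal K}\qd(z_{kj})$ forces $|z_{kj}|\le\Delta/2$. Your retaining the term $W(\xi)$ explicitly, where the paper simply discards it through the inequality $A^TP+PA\le 0$, is immaterial, as you yourself note.
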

\begin{proof}
Any Krasowskii solution to (\ref{cl}) satisfies the differential inclusion
\[
\dot \xi \in (I_N\otimes A)\xi-(I_N\otimes B)(D\otimes I_p){\cal K}\qd(z).
\]
Consider the Lyapunov function $V(\xi)=\xi^T (I_N\otimes P) \xi$. Then,
for any $\xi\in R^{Nn}$ and any
$\nu\in {\cal K}\qd(z)$,
with
$z=(D^T\otimes I_p)(I_N\otimes C)\xi$,
we have
\[\ba{rcl}
\dot V(\xi)&:=& \nabla V(\xi)\cdot [(I_N\otimes A)\xi-(I_N\otimes B)(D\otimes I_p)\nu]\\
&=& 2 \xi^T (I_N\otimes PA) \xi- 2 \xi^T (I_N\otimes PB)(D\otimes I_p)\nu.
\ea
\]
Using Assumption \ref{a.passivity} and the definition of $z$ we further obtain
that for all $\nu\in {\cal K}\qd(z)$,
\be\label{V.dot}\ba{rcl}
\dot V(\xi)
&\le& - 2 \xi^T (I_N\otimes C^T)(D\otimes I_p)\nu \\[2mm]
&=& - 2 z^T \nu \leq 0.
\ea
\ee
This shows that $V(\xi(t))$ cannot increase and that $\xi(t)$ is bounded. Moreover, by  LaSalle's
invariance principle for differential inclusion (Appendix \ref{a0}, Theorem \ref{diffLaSalletheorem}),
any Krasowskii solution
converges to the largest weakly invariant subset contained in
\[
\{\xi\in \R^{Nn}: \exists\; \nu\in {\cal K}\qd(z)\,{\rm s.t.}\,
\nabla V(\xi)\cdot [(I_N\otimes A)\xi-(I_N\otimes B)(D\otimes I_p)\nu]=0\}.
\]
In view of (\ref{V.dot}), any point $\xi$ in this set is such that
$z_{kj}\, \nu_{kj}=0$ for all $k=1,2,\ldots, M$ and for all $j=1,2,\ldots,p$.
Since 
$\nu_{kj}\in  {\cal K}\qd(z_{kj})$, then
$z_{kj}\, \nu_{kj}=0$ implies that $|z_{kj}|\le \frac{\Delta}{2}$. This ends the proof.
\end{proof}

\begin{remark}{\bf (Practical output synchronization)}
Similarly to
 the consensus problem under quantized measurements
(see Section \ref{ss.examples}), a consequence of the previous statement is that
any two outputs  $w_i,w_j$
practically asymptotically synchronize. Namely, considered any Krasowskii solution $\xi(t)$ and
the corresponding output $w(t)=(I_N\otimes C)\xi(t)$, for each $\ell=1,2, \ldots, n$
and each $t\ge 0$, the difference
$|w_{i\ell}(t)-w_{j\ell}(t)|$ is upper bounded by a quantity which
asymptotically converges to $d\frac{\Delta}{2}$, with $d$ the diameter
of the graph.
\end{remark}

The proof of the proposition above clearly does not rely on the linearity of the systems
but rather on the passivity property. Hence, if one considers nonlinear passive systems,
that is systems for which a positive definite continuously differentiable storage function
$V_i(\xi_i)$ exists such that $\nabla V_i\cdot f_i(\xi_i,u_i)\le w_i^T\cdot u_i$, with $w_i=h_i(\xi_i)$,
then for the closed-loop system $\dot \xi_i=f_i(\xi_i, u_i)$, with $u_i$ given in (\ref{control}),
and $i=1,2,\ldots, N$, it is still true that the overall storage function $V(\xi)=\sum_{i=1}^N V_i(\xi_i)$ satisfies
the inequality $\dot V(\xi)\le - 2z^T \nu$ for all $z=(D^T\otimes I_p)h(\xi)$ and all
$\nu\in {\cal K}\qd(z)$.
Hence, the following holds:
\begin{proposition}
Let Assumption \ref{assum.passivity} hold
and let the communication graph $G$ be undirected and connected. Then
any Krasowskii solution to the systems (\ref{feedb.path}) in closed-loop with {\rm $u=-(D\otimes I_p) \qd(z)$}  and
$z=(D^T\otimes I_p)h(\xi)$ converges to the largest weakly invariant subset contained in the set (\ref{lwis}).
\end{proposition}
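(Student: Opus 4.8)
The plan is to mirror the proof of the preceding proposition, replacing the linear passivity data of Assumption~\ref{a.passivity} by the storage function furnished by Assumption~\ref{assum.passivity}. First I would take as Lyapunov candidate the sum of the individual storage functions,
\[
V(\xi)=\sum_{i=1}^N S_i(\xi_i),
\]
which by Assumption~\ref{assum.passivity} is continuously differentiable, positive definite and radially unbounded. Because $f,g,h$ are continuous and the only discontinuity enters through $\qd$, the same calculus rule for the operator ${\cal K}$ used in Lemma~\ref{lemma.main} lets me write the differential inclusion governing any Krasowskii solution as
\[
\dot\xi\in f(\xi)-g(\xi)(D\otimes I_p)\,{\cal K}\qd(z),\qquad z=(D^T\otimes I_p)h(\xi).
\]
Since $V$ is smooth, its generalized gradient reduces to $\{\nabla V(\xi)\}$ and the set-valued derivative $\dot{\overline V}(\xi)$ is simply the set of numbers $\nabla V(\xi)\cdot w$ as $w$ ranges over the right-hand side above; this is the one place where the argument is even lighter than in Lemma~\ref{lemma.main}, where a nonsmooth potential $P(z)$ had to be carried along.

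The key estimate is then obtained exactly as in (\ref{V.dot}). For an arbitrary selection $\nu\in{\cal K}\qd(z)$ the corresponding input is $u=-(D\otimes I_p)\nu$, so that
\[
\nabla V(\xi)\cdot\big[f(\xi)-g(\xi)(D\otimes I_p)\nu\big]=\sum_{i=1}^N\nabla S_i(\xi_i)\big(f_i(\xi_i)+g_i(\xi_i)u_i\big).
\]
Applying the passivity inequality (\ref{passivity}) termwise and using $h(\xi)^Tu=-z^T\nu$ gives
\[
\nabla V(\xi)\cdot\big[f(\xi)-g(\xi)(D\otimes I_p)\nu\big]\le-\sum_{i=1}^N W_i(\xi_i)-z^T\nu .
\]
Because the scalar quantizer is monotone, each product $z_{kj}\nu_{kj}$ is nonnegative, hence $z^T\nu\ge0$, and together with $W_i\ge0$ this shows $\dot{\overline V}(\xi)\subseteq(-\infty,0]$. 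Consequently $V$ is nonincreasing along solutions, every Krasowskii solution is bounded, and solutions exist for all $t\ge0$.

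To conclude I would invoke the invariance principle for differential inclusions (Theorem~\ref{diffLaSalletheorem}): every solution converges to the largest weakly invariant subset of the set where $0\in\dot{\overline V}(\xi)$, i.e.\ of
\[
\big\{\xi\in\R^{nN}:\exists\,\nu\in{\cal K}\qd(z)\ \text{with}\ \nabla V(\xi)\cdot[f(\xi)-g(\xi)(D\otimes I_p)\nu]=0\big\}.
\]
At such a point the corresponding $\nu$ turns the displayed inequality into an equality, forcing both $\sum_i W_i(\xi_i)=0$ and $\sum_{k,j}z_{kj}\nu_{kj}=0$; since every summand $z_{kj}\nu_{kj}$ is nonnegative, each must vanish. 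As already observed in the proof of the preceding proposition, $z_{kj}\nu_{kj}=0$ with $\nu_{kj}\in{\cal K}\qd(z_{kj})$ is possible only if $|z_{kj}|\le\Delta/2$. Hence this set is contained in (\ref{lwis}), and therefore so is the largest weakly invariant subset of it, which proves the claim.

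I expect no genuine obstacle beyond bookkeeping: the substantive content is the single inequality above, inherited verbatim from passivity, with the smoothness of $V=\sum_iS_i$ making the nonsmooth machinery nearly transparent. The only points that require care are the set-valued calculus rule used to pass from $\qd$ to ${\cal K}\qd$ through the continuous map $h$ (already justified in Lemma~\ref{lemma.main}) and the monotonicity property $z_{kj}\nu_{kj}\ge0$ of the quantizer, with the product vanishing only when $|z_{kj}|\le\Delta/2$; everything else is a direct transcription of the linear argument.
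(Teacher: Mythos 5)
Your proposal is correct and follows essentially the same route as the paper, which proves this proposition by observing that the preceding linear proof uses only passivity: one takes the aggregate storage function $V(\xi)=\sum_{i=1}^N S_i(\xi_i)$, obtains $\dot{\overline V}(\xi)\subseteq(-\infty,0]$ from the termwise passivity inequality together with the sign property $z_{kj}\nu_{kj}\ge 0$ for $\nu\in\mathcal{K}\qd(z)$, and concludes via the nonsmooth LaSalle invariance principle and the fact that $z_{kj}\nu_{kj}=0$ with $\nu_{kj}\in\mathcal{K}\qd(z_{kj})$ forces $|z_{kj}|\le\Delta/2$. Retaining the $-\sum_{i=1}^N W_i(\xi_i)$ term from Assumption~\ref{assum.passivity} is a harmless (indeed slightly sharper) variant of the paper's estimate $\dot V(\xi)\le -2z^T\nu$.
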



The next lemma states a property of the average of the  solutions to (\ref{cl})
which helps to better characterize the region  where the solutions converge.
\begin{lemma}\label{l.invariance}
Let Assumption \ref{a.passivity} hold
and let the communication graph $G$ be undirected and connected.
Any Krasowskii solution $\xi(t)$ to (\ref{cl}) satisfies
\[
(\mathbf{1}_N^T\otimes I_n) \xi(t)={\rm e}^{At} (\mathbf{1}_N^T\otimes I_n) \xi(0)
\]
for all $t\ge 0$.
\end{lemma}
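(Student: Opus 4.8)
The plan is to show that the aggregate quantity $m(t):=(\mathbf{1}_N^T\otimes I_n)\xi(t)$ obeys the closed autonomous linear ODE $\dot m = A m$ for almost every $t$, and then to integrate it. The crucial observation is that the quantized coupling term — the only potentially problematic, set-valued and discontinuous part of the right-hand side of (\ref{cl}) — vanishes identically upon left-multiplication by $\mathbf{1}_N^T\otimes I_n$, so that \emph{no} selection from the differential inclusion actually enters the computation.

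First I would fix a Krasowskii solution $\xi(t)$, which by definition is absolutely continuous and satisfies, for a.e.\ $t$,
\[
\dot\xi(t)\in (I_N\otimes A)\xi(t)-(I_N\otimes B)(D\otimes I_p){\cal K}\qd(z(t)),
\]
with $z(t)=(D^T\otimes I_p)(I_N\otimes C)\xi(t)$. Hence for a.e.\ $t$ there exists some $\nu(t)\in{\cal K}\qd(z(t))$ with $\dot\xi(t)=(I_N\otimes A)\xi(t)-(I_N\otimes B)(D\otimes I_p)\nu(t)$. Since $m$ is a linear image of the absolutely continuous $\xi$, it is itself absolutely continuous and $\dot m(t)=(\mathbf{1}_N^T\otimes I_n)\dot\xi(t)$ for a.e.\ $t$.

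The key computation uses the mixed-product rule for the Kronecker product. On the drift term,
\[
(\mathbf{1}_N^T\otimes I_n)(I_N\otimes A)=\mathbf{1}_N^T\otimes A=A(\mathbf{1}_N^T\otimes I_n),
\]
so this contributes $A\,m(t)$. On the coupling term,
\[
(\mathbf{1}_N^T\otimes I_n)(I_N\otimes B)(D\otimes I_p)=(\mathbf{1}_N^T D)\otimes B.
\]
Here I would invoke the defining property of the incidence matrix $D$: each column has exactly one $+1$ and one $-1$ entry, so $\mathbf{1}_N^T D=\mathbf{0}$, whence $(\mathbf{1}_N^T D)\otimes B=\mathbf{0}$ and the coupling term drops out regardless of $\nu(t)$. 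This is precisely the step at which the discontinuity and the set-valued nature of $\qd$ become irrelevant, which is why the identity holds for \emph{every} Krasowskii solution.

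Consequently $\dot m(t)=A\,m(t)$ for a.e.\ $t$, with $m$ absolutely continuous. Integrating this linear equation — equivalently, noting that ${\rm e}^{-At}m(t)$ is absolutely continuous with a.e.\ derivative $\mathbf{0}$, hence constant — yields $m(t)={\rm e}^{At}m(0)$, i.e.\ $(\mathbf{1}_N^T\otimes I_n)\xi(t)={\rm e}^{At}(\mathbf{1}_N^T\otimes I_n)\xi(0)$ for all $t\ge 0$. There is essentially no serious obstacle here: the only point that warrants care is confirming that the coupling term cancels uniformly over the whole selection set ${\cal K}\qd(z)$, and this reduces entirely to the elementary identity $\mathbf{1}_N^T D=\mathbf{0}$.
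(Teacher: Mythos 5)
Your proof is correct and takes essentially the same route as the paper's: both left-multiply the inclusion by $\mathbf{1}_N^T\otimes I_n$, use the Kronecker mixed-product identity together with $\mathbf{1}_N^T D=\mathbf{0}_M^T$ to annihilate the quantized coupling term uniformly over the whole set ${\cal K}\qd(z)$, and integrate the resulting equation $\dot m(t)=Am(t)$. Your explicit handling of the measurable selection $\nu(t)$ and of the integration step (via the constancy of ${\rm e}^{-At}m(t)$) is a harmless sharpening of details the paper leaves implicit.
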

\begin{proof}
Observe that for almost every $t$:
\[\ba{rcl}
\dst\frac{d}{dt}
(\mathbf{1}_N^T\otimes I_n)\xi(t)
&=&
(\mathbf{1}_N^T\otimes I_n)\dst\frac{d}{dt}\xi(t)\\
&\in & (\mathbf{1}_N^T\otimes I_n)(I_N\otimes A)\xi(t)-(\mathbf{1}_N^T\otimes I_n)(I_N\otimes B)(D\otimes I_p) {\cal K}\qd(z)
\ea\]
Bearing in mind that for matrices $F\in \R^{m\times n}$ and $G\in \R^{p\times q}$, the following property of
the Kronecker product holds:
\[
F \otimes G= (F\otimes I_p)(I_n\otimes G)=(I_m\otimes G)(F\otimes I_q),
\]
one can further show that
\be\label{Star}\ba{rcl}
\dst\frac{d}{dt}
(\mathbf{1}_N^T\otimes I_n)\xi(t)
&\in &(\mathbf{1}_N^T\otimes I_n)(I_N\otimes A)\xi(t)-B(\mathbf{1}_N^T D\otimes I_p) {\cal K}\qd(z)
\\
&= &(\mathbf{1}_N^T\otimes I_n)(I_N\otimes A)\xi(t)\\
&= &A(\mathbf{1}_N^T\otimes I_n) \xi(t)\\
\ea\ee
where in the equality before the last one it was exploited the fact that $\mathbf{1}_N^T D=\mathbf{0}_M^T$,
which holds by definition of the incidence matrix $D$. Hence, any Krasowskii solution
$\xi(t)$ is such that the average $(\mathbf{1}_N^T\otimes I_n) \xi(t)$ satisfies
\[
(\mathbf{1}_N^T\otimes I_n) \xi(t)={\rm e}^{At} (\mathbf{1}_N^T\otimes I_n) \xi(0).
\]
\end{proof}

The following result provides an estimate of the region where the solutions converge
and
%
shows
%
practical synchronization
under quantized relative measurements:
\begin{theorem}\label{th.syncro}
Let Assumption \ref{a.passivity} hold
and let the communication graph $G$ be undirected and connected. Assume that $(C,A)$ is detectable. Then
for any Krasowskii solution $\xi(t)$ to
\be\label{synchro.quantized}
\ba{rcl}
\dot \xi &=& (I_N\otimes A)\xi-(I_N\otimes B)(D\otimes I_p){\tt q}
((D^T\otimes I_p)(I_N\otimes C)\xi)
%
\ea\ee
 there exists a finite time $T$ such that $\xi(t)$
satisfies
\be\label{bound}
\dst\frac{1}{\sqrt{pM}}
\left\| \xi(t)-(\mathbf{1}_N\otimes
I_n)\frac{(\mathbf{1}_N^T\otimes
I_n)\xi(t)}{N}\right\|
\le 2\sqrt{\dst\frac{c_2}{c_1}}||R||\, ||B||\,
||D\otimes I_p|| \Delta
\ee
for all $t\ge T$,
where $c_1, c_2, ||R||$ are defined in (\ref{R.bound}), (\ref{c1c2}).
Moreover, $\frac{\mathbf{1}_N^T\otimes I_n}{N}\xi(t) = \xi_0(t)$ where $\xi_0(t)$ is the solution of
$\dot \xi_0(t) = A\xi_0(t)$ with the initial condition $\xi_0(0)=\frac{\mathbf{1}_N^T\otimes I_n}{N}\xi(0)$.
\end{theorem}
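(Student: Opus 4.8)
The plan is to combine three ingredients already established: the convergence result from the preceding Proposition (which guarantees practical output synchronization, i.e.\ eventual entry into the set \rfb{lwis}), the quadratic Lyapunov bounds on $U(\tilde\xi)$ from Lemma \ref{firstlemma}, and the invariance of the average from Lemma \ref{l.invariance}. The target bound \rfb{bound} is a statement about the disagreement vector $\tilde\xi=(\Pi\otimes I_n)\xi$, so the natural object to track is $U(\tilde\xi)=\tilde\xi^T R\tilde\xi$. The second part of the theorem (the claim that $\frac{\mathbf{1}_N^T\otimes I_n}{N}\xi(t)=\xi_0(t)$) is essentially immediate from Lemma \ref{l.invariance}, so I would dispose of it first and then devote the main effort to the quantitative estimate.

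First I would write the dynamics of $\tilde\xi$. Since $\tilde\xi=(\Pi\otimes I_n)\xi$ and $\Pi$ annihilates $\mathbf{1}_N$, applying $(\Pi\otimes I_n)$ to \rfb{synchro.quantized} and using $\mathbf{1}_N^TD=\mathbf{0}$ together with the Kronecker identities from Lemma \ref{l.invariance}, the disagreement vector obeys a differential inclusion of the form
\be\label{plan.tildedyn}
\dot{\tilde\xi}\in \tilde A\,\tilde\xi-(I_N\otimes B)(D\otimes I_p)\big({\cal K}\qd(z)-\text{(projected term)}\big),
\ee
where the key point is that the \emph{unquantized} part reproduces exactly the matrix $\tilde A$ of \rfb{A.tilde}, and the residual is driven by the quantization error $\qd(z)-z$, which is bounded entrywise by $\Delta/2$. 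I would then differentiate $U$ along solutions: by \rfb{c1c2} the $\tilde A\tilde\xi$ contribution gives $\nabla U\cdot\tilde A\tilde\xi\le-\|\tilde\xi\|^2$, while the quantization-error term contributes a perturbation bounded in norm by a constant times $\|\nabla U\|\,\|B\|\,\|D\otimes I_p\|\,\Delta$. Using $\|\nabla U\|\le 2\|R\|\,\|\tilde\xi\|$ and the lower bound $U\ge c_1\|\tilde\xi\|^2$, this yields a differential inequality $\dot U\le -\|\tilde\xi\|^2+(\text{const}\cdot\Delta)\|\tilde\xi\|$, which forces $\|\tilde\xi\|$ to enter and remain in a ball of radius proportional to $\Delta$ after a finite time $T$; converting back through $U\le c_2\|\tilde\xi\|^2$ and tracking the constants $\sqrt{c_2/c_1}$, $\|R\|$, $\|B\|$, $\|D\otimes I_p\|$ should reproduce \rfb{bound}, with the factor $1/\sqrt{pM}$ arising from relating the entrywise quantization bound $\Delta/2$ to the Euclidean norm of the error over the $pM$ quantized channels.

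\textbf{The main obstacle} I expect is the careful bookkeeping of the quantization-error term in the nonsmooth (Krasowskii) setting: the right-hand side of \rfb{synchro.quantized} is discontinuous, so $\dot U$ must be interpreted through the set-valued derivative $\dot{\overline U}$, and I must verify that the estimate $\nabla U\cdot\tilde A\tilde\xi\le-\|\tilde\xi\|^2$ from Lemma \ref{firstlemma} still controls \emph{every} element of the inclusion after the projection by $(\Pi\otimes I_n)$ is inserted. In particular I would need to confirm that replacing $z$ by an element $\nu\in{\cal K}\qd(z)$ only perturbs the flow by the bounded quantization error and does not alter the stabilizing $\tilde A$ part — this rests on the identity $(\Pi\otimes I_n)(I_N\otimes B)(D\otimes I_p)=(I_N\otimes B)(D\otimes I_p)$ (since $\mathbf 1_N^TD=0$ means the columns already lie in the range of $\Pi\otimes I_n$), which must be checked explicitly. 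Once that identity is in hand, the finite-time entry into the $\Delta$-ball follows from a standard comparison argument applied to the scalar inequality for $U$, and the stated constant in \rfb{bound} follows by keeping the inequalities tight.
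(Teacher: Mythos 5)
Your proposal is correct and follows essentially the same route as the paper: the paper likewise rewrites the quantized flow for $\tilde\xi=(\Pi\otimes I_n)\xi$ as the nominal part $\tilde A\tilde\xi$ plus an error term $(I_N\otimes B)(D\otimes I_p)[(D^T\otimes I_p)(I_N\otimes C)\tilde\xi-\nu]$ with $\|\nu-z\|\le\sqrt{pM}\,\Delta/2$ for every $\nu\in{\cal K}\qd(z)$, obtains $\nabla U(\tilde\xi)\cdot(\cdot)\le-\|\tilde\xi\|\big(\|\tilde\xi\|-\|R\|\,\|B\|\,\|D\otimes I_p\|\sqrt{pM}\,\Delta\big)$ from Lemma \ref{firstlemma}, concludes finite-time entry into the $\Delta$-ball via the comparison $U\le c_2\|\tilde\xi\|^2$, and settles the average claim by Lemma \ref{l.invariance}, exactly as you plan. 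The only cosmetic differences are that your appeal to the preceding Proposition is not actually needed (the Lyapunov argument is self-contained) and that the paper dispenses with your ``projected term'' by observing that $z$ computed from $\tilde\xi$ coincides with $z$ computed from $\xi$, since $D^T\mathbf{1}_N=\mathbf{0}$ --- the same identity you flag as the main point to verify.
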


\begin{proof}
By definition, any Krasowskii solution $\xi$ to
(\ref{synchro.quantized}) is such that $\tilde \xi=(\Pi\otimes
I_n)\xi$, with $\Pi=I_N-
\frac{\mathbf{1}_N\mathbf{1}_N^T}{N}$,  satisfies
\[
\dot {\tilde \xi} \in  (I_N\otimes A)\tilde \xi-(I_N\otimes B)(D\otimes I_p)
{\cal K}\qd((D^T\otimes I_p)(I_N\otimes C)\xi), 
\]
where similar manipulations as in (\ref{Star}) were used.
Moreover, any $\nu\in {\cal K}\qd((D^T\otimes
I_p)(I_N\otimes C)\xi)$ is such that $||\nu-(D^T\otimes
I_p)(I_N\otimes C)\xi||\le \dst\sqrt{pM}\frac{\Delta}{2}$.
Under the assumption on the detectability of $(C,A)$, we
can consider the Lyapunov function $U(\tilde \xi)$ introduced in
Lemma \ref{firstlemma}. For any $\xi$ and any $\nu\in {\cal K}\qd((D^T\otimes I_p)(I_N\otimes
C)\xi)$,
\[\ba{rcl}
&&
\nabla U(\tilde \xi)
[(I_N\otimes A)\tilde \xi-(I_N\otimes B)(D\otimes I_p)\nu]\\[3mm]
&=& 
\nabla U(\tilde \xi)
[(I_N\otimes
A)-(I_N\otimes B)(DD^T\otimes I_p)
(I_N\otimes C)]\tilde \xi +\\[3mm]
&&
\nabla U(\tilde \xi)
(I_N\otimes B)(D\otimes
I_p)[(D^T\otimes I_p)
(I_N\otimes C)\tilde \xi - \nu]\\[3mm]
&\le & -||\tilde \xi||(||\tilde \xi||- ||R||\, ||B||\, ||D\otimes
I_p||\dst\sqrt{pM}\Delta).
\ea\]
Hence, for $||\tilde \xi||>
\frac{1}{2}||R||\, ||B||\, ||D\otimes I_p||\dst\sqrt{pM}\Delta$,
\[
\nabla U(\tilde \xi)
[(I_N\otimes
A)\xi-(I_N\otimes B)(D\otimes I_p)\nu]\le- \dst\frac{||\tilde
\xi||^2}{2}\le -\dst\frac{1}{2 c_2} U(\tilde \xi).
\]
It follows that any Krasowskii solution converges in finite time
to the set of points $\tilde \xi $ such that
\[
||\tilde \xi|| \le 2\sqrt{\dst\frac{c_2}{c_1}}||R||\, ||B||\,
||D\otimes I_p||\dst\sqrt{pM}\Delta
\]
from which the thesis is proven by definition of $\tilde \xi$.

The proof of the final claim follows from the
fact that by Lemma \ref{l.invariance},  for all $t\ge 0$,
\[
\tilde \xi(t)=\xi(t)- (\mathbf{1}_N\otimes
I_n)\frac{(\mathbf{1}_N^T\otimes
I_n)\xi(t)}{N}=\xi(t)- (\mathbf{1}_N\otimes
I_n)\frac{\dst{\rm e}^{At}
(\mathbf{1}_N^T\otimes
I_n)\xi(0)}{N}.
\]
%
%
%
\end{proof}

\begin{remark}{\bf (Role of $||R||$)}
In the case $A=0$, $B=C=1$,
the bound on $R$ reduces to
$||R||\le \frac{1}{2\lambda_2}$, where $\lambda_2$ is the algebraic connectivity of the graph.
In this case, the size of the region of convergence in (\ref{bound})
resembles the estimate given in Theorem 1 and Corollary 1
in \cite{ceragioli.et.al.aut11} for quantized consensus of single
integrators. Theorem \ref{th.syncro} can  be viewed
as the extension of the results in  \cite{ceragioli.et.al.aut11} to the problem of
synchronization of  linear multi-variable
passive systems by quantized output feedback.
\end{remark}

\subsection{Examples}

In the following examples, we discuss how  synchronization with quantized
measurements can play a role in a decentralized output regulation problem in
which heterogeneous systems asymptotically agree on the trajectory to track.

{\em Output synchronization for heterogeneous linear systems.} 
In \cite{wieland.et.al.aut11} (see also \cite{bai.et.al.book}, Section 3.6) the following
problem is investigated. Given $N$ heterogeneous linear systems
\be\label{examplesynchro}\ba{rcll}
\dot x_i &=& F_i x_i +G_i u_i\\
y_i &=& H_i x_i, & i=1,2,\ldots, N
\ea\ee
%
with $(F_i, G_i)$ stabilizable and $(H_i, F_i)$ detectable,
and a graph $G$
(which here, as usual in this paper, we assume static
undirected and connected), find a feedback control law $u_i$
for each system $i$ (i) which uses relative measurements concerning only the systems which are connected
to the system $i$ via the graph $G$ and (ii) such that output synchronization
is achieved, i.e.\ $\lim_{t\to \infty} || y_i(t)-y_j(t)||=0$ for all $i,j \in \{1,2,\ldots, N\}$.\\
Excluding the trivial case in which the closed-loop system has an attractive set of equilibria where the
outputs are all zero, the authors of \cite{wieland.et.al.aut11} show that
the output synchronization problem
for $N$ heterogeneous systems is solvable if and only if there exist matrices $S,R$
such that $\lim_{t\to \infty} ||y_i(t)-R{e}^{-St}w_0||=0$ for each $i \in \{1,2,\ldots, N\}$, for some $w_0$.
Moreover, provided that $\sigma (S)\subset j\R$, the controllers which
solve the regulation problem are
\be\label{synchro.regulator}
\ba{rcl}
\dot{\hat x}_i &=& F_i \hat x_i +G_i u_i + L_i (\hat y_i- C_i x_i)\\
\hat y_i &=& H_i \hat x_i\\
u_i &=& K_i(\hat x_i- \Pi_i \xi_i) +\Gamma_i \xi_i
\ea
\ee
where $\xi_i\in \R^p$ are the exosystem states that synchronize via communication channels and are described by
\be\label{synchro.exosystem}
\ba{rcl}
\dot \xi &=& (I_N\otimes S)\xi-(I_N\otimes B)(D\otimes I_p)z\\
z &=& (D^T\otimes I_p)(I_N\otimes C)\xi,
\ea\ee
where
$D$ is the incidence matrix associated to the graph,
%
%
the pair $(C,S)$
is detectable the matrices $L_i, K_i$ are such that $F_i+G_iK_i, F_i+L_iH_i$ are Hurwitz, and $\Pi_i, \Gamma_i$ are matrices
which solve the regulator equations
\[\ba{l}
F_i\Pi_i+G_i\Gamma_i=\Pi_i S\\
H_i\Pi_i=R.
\ea\]
The controllers (\ref{synchro.regulator})--(\ref{synchro.exosystem}) are a modified form of the ones in \cite[Eq. (10)]{wieland.et.al.aut11} where
in the latter, the local controller communicates the entire exosystem state $\xi_i$ to its connecting nodes.
When the relative measurement $z_k$ is transmitted via a digital communication line, then this information is quantized and
the variable $z$ in the controller (\ref{synchro.regulator})--(\ref{synchro.exosystem}) is replaced by its quantized form $\qd(z)$. \\
Let the eigenvalues of $S$ have in addition multiplicity of one in the minimal polynomial,
so that we can restrict $S$ to be skew-symmetric without loss of generality and $B=C^T$. Then
the exosystems
\be\label{exo.lti}\ba{rcll}
\dot \xi_i &=&  S \xi_i + B u_i\\
w_i &=& C \xi_i & i=1,2,\ldots, N
\ea\ee
trivially satisfy Assumption \ref{a.passivity}. 
Then Theorem \ref{th.syncro} applies and the solutions $\xi_i$, $i=1,2,\ldots, N$, of (\ref{synchro.exosystem}) practically synchronize under the quantization of $z$.
It is then possible to see that the closed-loop system of (\ref{examplesynchro}) and the controllers (\ref{synchro.regulator})--(\ref{synchro.exosystem}) with
$z$ replaced by $\qd(z)$ achieves practical output synchronization.
This follows from similar arguments as in \cite[Theorem 5]{wieland.et.al.aut11} where \cite[Theorem 1]{wieland.et.al.aut11}, which is used in the proof the theorem,
is replaced by Theorem \ref{th.syncro}.

Before ending the section, we remark that Theorem \ref{th.syncro} also holds under a
slightly different set of conditions which do not require passivity.
\begin{assumption}\label{positive.realness}
Let $(A,B,C)$ be stabilizable and detectable, and assume that
\be\label{spr}
[I_p+\lambda_N {\mathbf{G}}][I_p+\lambda_2\mathbf{G}]^{-1}
\ee
is strictly positive real where $\mathbf{G}(s)= C(sI-A)^{-1}B$ is the transfer function of {\rm (\ref{lti})} and $\lambda_N$ is the largest eigenvalue of $L$.
\end{assumption}
Under Assumption \ref{positive.realness}, the results in Theorem \ref{th.syncro} still hold mutatis mutandis.
Indeed, by the multivariable circle criterion
in \cite[Theorem 3.4]{jayawardhana2009}, $(A-\lambda_i BC)$ is Hurwitz for every non-zero eigenvalue
$\lambda_i$ of $L$.
This implies that (\ref{A.tilde}) is exponentially stable
(this is evident from 
the proof of Lemma \ref{firstlemma} -- see the Appendix) and Lemma \ref{firstlemma}
and \ref{l.invariance} continue to hold. As a consequence the proof of Theorem \ref{th.syncro} holds
word by word under the assumption that $(A,B,C)$ is minimal and
Assumption \ref{positive.realness} holds.

{\em The case of output synchronization with filtered and quantized signals.}
As a concrete example to the case of exosystems satisfying Assumption \ref{positive.realness}, we consider again the closed-loop systems in the previous example where the heterogenous linear systems (\ref{examplesynchro}) are interconnected with the controllers (\ref{synchro.regulator})--(\ref{synchro.exosystem}) with
\begin{equation}
S = \left[\begin{array}{ccc} 0 & \omega & 0 \\ -\omega & 0 & 0 \\ 0 & a & -a
\end{array}\right], B = \left[\begin{array}{c} 0 \\ 1 \\ 0
\end{array}\right], C = \left[\begin{array}{ccc} 0 & 0 & 1
\end{array}\right].
\end{equation}
The system $(S,B,C)$ can be considered as a cascade interconnection of
a second-order oscillator with frequency $\omega$ and a low-pass filter with a cut-off frequency $a$, and its transfer function is given by
\[
G(s) = \frac{as}{(s^2+\omega^2)(s+a)}.
\]
Using the above $(S,B,C)$, the interconnected exosystems (\ref{synchro.exosystem}) with quantized
measurement $\qd(z)$ resemble a network of oscillators where the relative measurements $z_k$
are filtered and quantized.
In the limiting case $a\to\infty$, the exosystems are given by (\ref{exo.lti}) where
\begin{equation}
A = \left[\begin{array}{cc} 0 & \omega\\ -\omega & 0
\end{array}\right], B = \left[\begin{array}{c} 0 \\ 1
\end{array}\right], C = \left[\begin{array}{cc} 0 & 1
\end{array}\right];
\end{equation}
and it satisfies Assumption \ref{a.passivity}. A direct application of Theorem \ref{th.syncro}
shows that (\ref{bound}) holds with
\begin{equation*}
\|R\| \leq \int_0^\infty{\left\|
\left(
\begin{array}{ccc} \exp\left(\begin{array}{cc}0 & \omega \\ -\omega & -\lambda_2\end{array}\right)s & \cdots &
\mathbf{0}_{n\times n}\\ \vdots & \ddots & \vdots \\ \mathbf{0}_{n\times n} & \cdots & \exp\left(\begin{array}{cc}0 &
\omega \\ -\omega & -\lambda_N\end{array}\right)s \end{array}\right)\right\| \dd s}.
\end{equation*}
In particular, if $\lambda_2>4\omega^2$, then $\|R\| \leq \frac{1}{\lambda_2-\sqrt{\lambda_2^2-4\omega^2}}$. \\
On the other hand, if $0<a<\infty$, i.e., when the low-pass filter is used, then it can be checked that
\begin{align*}
& \inf_{\nu}\text{Re}\left(\frac{1+\lambda_N G(i\nu)}{1+\lambda_2G(i\nu)}\right) \geq 0 \\
\Leftrightarrow & \inf_{\nu}(a\omega^2-a\nu^2)^2+((\omega^2+\lambda_Na)\nu-\nu^3)((\omega^2+\lambda_2a)\nu-\nu^3) \geq 0.
\end{align*}
Note that for a sufficiently large $a>0$, the above condition holds. Thus, the cut-off frequency $a$ can be designed based only on the knowledge of $\lambda_2, \lambda_N$ and $\omega$,
such that the exosystems (\ref{exo.lti}) satisfy Assumption \ref{positive.realness}.\\
In both cases, practical output synchronization of the closed-loop systems (\ref{examplesynchro})--(\ref{synchro.exosystem}) with quantized $\qd(z)$ is obtained.

\section{Conclusions}\label{sec.final}
The passivity approach to coordinated control problems presents
several interesting features such as for instance the possibility
to deal with agents which have complex and high-dimensional
dynamics. In this paper we have shown how it also lends itself to
take into account the presence of quantized measurements. Using
the passivity framework along with appropriate tools from
nonsmooth control theory and differential inclusions, we have
shown that many of the results of \cite{arcak.tac07, scardovi.sepulchre.aut} continue to
hold in an appropriate sense in the presence of quantized
information. We believe that the results presented in the paper
are a promising addition to the existing literature on
continuous-time consensus and coordinated control under
quantization (\cite{dimarogonas.johansson.aut10,
ceragioli.et.al.aut11,
liu.cao.ifac11,frasca.arxiv11}).\\
Many additional aspects deserve attention in future work on the
topic. The approach to quantized coordinated control pursued in
this paper appears to be suitable to tackle more complex formation
control problems such as those considered e.g.\ in Section II.C of
\cite{arcak.tac07}, \cite{dimarogonas.johansson.aut10}, Section 4
and \cite{tanner.et.al.cdc03.I}. These possible extensions can
also benefit from the results of \cite{bai.et.al.aut09}.\\
In the paper it was not discussed whether or not the use of
quantized measurements yields sliding modes. Sliding modes were
shown to occur in problems of quantized consensus for single
integrators (\cite{ceragioli.et.al.aut11}) and hysteretic
quantizers were introduced to overcome the problem. A similar
device could prove useful in quantized coordination problems.\\
The literature on synchronization and coordination problems which
exploit passivity is rich (see e.g.\
\cite{pogromsky.nijmeijer.csi01,scardovi.sepulchre.aut,
chopra.spong.cdc06, schaft.maschke.necsys10} and references
therein) and the problems presented there could be reconsidered in
the presence of quantized measurements. The book \cite{bai.et.al.book}
provides many other results of cooperative control within the passivity
approach. These results are all potentially extendible to the case
in which quantized measurements are in use.

\bigskip

\noindent {\bf Acknowledgement} The authors would like to thank Paolo
Frasca for a remark on the first example in Section 3.3.

\bibliographystyle{plain}
\bibliography{QUICK-BIBLIO_16Aug11}

\appendix

\section{Notation}\label{a-1}
The Kronecker product of the matrices $A\in \R^{m\times n}$, $B\in
\R^{p\times q}$ is the matrix
\[
A\otimes B=\left(\ba{ccc}
a_{11}\, B & \ldots & a_{1n}\, B\\
\vdots & \ddots & \vdots\\
a_{m1}\, B & \ldots & a_{mn}\, B \ea\right).
\]
See e.g.\ \cite{arcak.tac07,scardovi.sepulchre.aut} for some basic
properties.

\section{Nonsmooth control theory tools}\label{a0}
A few tools of nonsmooth control theory which are used throughout
the paper are recalled in this appendix (see
\cite{bacciotti.ceragioli.esaim99,cortes.csm08}  for more details).
Consider the differential inclusion \be\label{di} \dot x\in F(x),
\ee with $F:\R^n \to 2^{\R^n}$ a set-valued map. We assume for $F$
the standard assumptions for which existence of solutions is
guaranteed (\cite{OH:79}). $x_0\in \R^n$ is a Krasowskii
equilibrium for (\ref{di}) if the function $x(t)=x_0$ is a
Krasowskii solution to (\ref{di}) starting from the initial
condition $x_0$, namely if $\mathbf{0}\in F(x_0)$. A set ${\cal
S}$ is weakly (strongly) invariant for (\ref{di}) if for any
initial condition $\overline x\in {\cal S}$ at least one (all the)
Krasowskii solution $x(t)$ starting from $\overline x$ belongs
(belong) to  ${\cal S}$ for all $t$ in the domain of definition of
$x(t)$. Let $V:\R^n\to \R$ be a locally Lipschitz function. Then
by Rademacher's theorem the gradient of $V$ exists almost
everywhere. Let $N$ be the set of measure zero where $\nabla V(x)$
does not exist. Then the Clarke generalized gradient of $V$ at $x$
is the set $\partial V(x)= {\rm co}\{\lim_{i\to+\infty} \nabla
V(x_i)\,:\, x_i\to x,\; x_i\not\in S\,,\, x_i\not\in N\}$ where
$S$ is any set of measure zero  in $\R^n$. We define the
set-valued derivative of $V$ at $x$ with respect to (\ref{di}) the
set $\dot{\overline V}(x)=\{a\in \R:\, \exists v\in {\cal
K}f(x)\;{\rm s.t.}\;a=p\cdot v,\;\forall p\in \partial V(x)\}$.
The definition of regular functions used in the following
nonsmooth LaSalle invariance principle can be found e.g.\ in
\cite{bacciotti.ceragioli.esaim99}:
\begin{theorem}\label{diffLaSalletheorem}
{\rm (\cite{bacciotti.ceragioli.esaim99, cortes.aut06})} Let
$V:\R^n\to \R$ be a locally Lipschitz and regular function. Let
$\overline x\in {\cal S}$, with ${\cal S}$ compact and strongly
invariant for (\ref{di}). Assume that for all $x\in {\cal S}$
either $\dot{\overline V}(x)=\emptyset$ or $\dot{\overline
V}(x)\subseteq (-\infty, 0]$. Then any Krasowskii solution to
(\ref{di}) starting from $\overline x$ converges to the largest
weakly invariant subset contained in ${\cal S}\cap \{x\in
\R^n\,:\, \mathbf{0}\in \dot{\overline V}(x)\}$, with $\mathbf{0}$
the null vector in $\R^n$.
\end{theorem}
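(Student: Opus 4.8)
The plan is to run the classical LaSalle argument, adapted to the differential inclusion (\ref{di}), with the regularity of $V$ supplying the nonsmooth chain rule that stands in for the smooth identity $\frac{d}{dt}V=\nabla V\cdot\dot x$. The whole scheme rests on two ingredients: a chain-rule lemma controlling the decay of $V$ along solutions, and the weak invariance of the positive limit set of a bounded solution. The strong invariance and compactness of $\mathcal{S}$ will be used only to guarantee that the solution stays in a fixed compact set, exists for all $t\ge 0$, and that $V$ is bounded below along it.

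First I would establish the chain rule. For a locally Lipschitz and \emph{regular} $V$ and any Krasowskii solution $x(\cdot)$ of $\dot x\in F(x)$, the composition $t\mapsto V(x(t))$ is absolutely continuous, being a Lipschitz function composed with an absolutely continuous one, hence differentiable for a.e.\ $t$. The essential point, and the place where regularity is indispensable, is that at almost every such $t$ one has $\frac{d}{dt}V(x(t))=\langle p,\dot x(t)\rangle$ for \emph{every} $p\in\partial V(x(t))$, with $\dot x(t)\in F(x(t))$; by the definition of the set-valued derivative this says exactly that $\frac{d}{dt}V(x(t))\in\dot{\overline V}(x(t))$ for a.e.\ $t$. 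I expect this lemma to be the main technical obstacle: the coincidence of $\frac{d}{dt}V(x(t))$ with $\langle p,\dot x(t)\rangle$ across \emph{all} subgradients $p$ can fail for a merely Lipschitz $V$, and recovering it requires the regularity hypothesis together with a careful almost-everywhere measure argument (this is the content of the preparatory lemmas in \cite{bacciotti.ceragioli.esaim99}).

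Granting the chain rule, monotonicity is immediate: wherever $\frac{d}{dt}V(x(t))$ exists it lies in $\dot{\overline V}(x(t))$, which by hypothesis is either empty or contained in $(-\infty,0]$; since the existence of the derivative already forces $\dot{\overline V}(x(t))\ne\emptyset$, we obtain $\frac{d}{dt}V(x(t))\le 0$ a.e., so $t\mapsto V(x(t))$ is non-increasing. Because $\mathcal{S}$ is strongly invariant and compact, the solution through $\overline x$ remains in $\mathcal{S}$, is defined for all $t\ge 0$ and bounded, and $V$ is bounded below on $\mathcal{S}$ by continuity; a bounded non-increasing real function converges, so $V(x(t))\to c$ for some $c\in\R$.

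Finally I would analyze the positive limit set $\Omega$ of $x(\cdot)$. Boundedness makes $\Omega$ non-empty and compact, with $\mathrm{dist}(x(t),\Omega)\to 0$ and $\Omega\subseteq\mathcal{S}$, while continuity of $V$ together with $V(x(t))\to c$ gives $V\equiv c$ on $\Omega$. The standard fact that $\Omega$ is \emph{weakly invariant} for an upper semicontinuous inclusion I would derive from closedness of the solution set under local uniform limits: shifting the solution along a time sequence $t_n\to\infty$ with $x(t_n)\to y$ and extracting a limit trajectory through $y$ that stays in $\Omega$. To locate $\Omega$ inside the target set, take any $y\in\Omega$ and the complete solution $y(\cdot)\subseteq\Omega$ through it furnished by weak invariance; since $V\equiv c$ on $\Omega$ the chain rule forces $\frac{d}{dt}V(y(t))=0$, hence $\mathbf{0}\in\dot{\overline V}(y(t))$ for a.e.\ $t$, so that $\Omega$ is a weakly invariant subset of $\mathcal{S}\cap\{x:\mathbf{0}\in\dot{\overline V}(x)\}$ (the almost-everywhere statement is promoted to all of $\Omega$ using continuity of $y(\cdot)$ and the density of the full-measure set of good times). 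Being weakly invariant, $\Omega$ is contained in the \emph{largest} such subset, and since $\mathrm{dist}(x(t),\Omega)\to 0$ the Krasowskii solution converges to it, which is the assertion.
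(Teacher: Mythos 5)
The paper does not prove this theorem: it is recalled in Appendix \ref{a0} as a known result from \cite{bacciotti.ceragioli.esaim99,cortes.aut06}, so your proposal can only be measured against the proofs in those references. Your outline is indeed the standard route used there (Shevitz--Paden/Bacciotti--Ceragioli): the chain-rule lemma for regular $V$ (that for a.e.\ $t$, $\frac{d}{dt}V(x(t))=\langle p,\dot x(t)\rangle$ simultaneously for all $p\in\partial V(x(t))$, hence $\frac{d}{dt}V(x(t))\in\dot{\overline V}(x(t))$), monotone convergence of $V(x(t))$ on the compact strongly invariant set $\mathcal{S}$, and weak invariance of the $\omega$-limit set $\Omega$ via compactness of the solution set of the upper semicontinuous, locally bounded, compact-convex-valued inclusion. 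Those three stages are correctly identified and correctly argued, and you rightly flag the chain rule as the place where regularity is indispensable.

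The gap is in your last step, the ``promotion'' of the a.e.\ statement to all of $\Omega$. Along a solution $y(\cdot)\subseteq\Omega$ you get $0\in\dot{\overline V}(y(t))$ only for a.e.\ $t$ (note that $V(y(t))\equiv c$ makes the derivative zero for \emph{all} $t$, but the chain-rule membership in $\dot{\overline V}$ still holds only a.e.), and continuity of $y(\cdot)$ then places an arbitrary point of the trajectory only in the \emph{closure} of $Z:=\{x:\mathbf{0}\in\dot{\overline V}(x)\}$, not in $Z$ itself. The set $Z$ is not closed in general: $\dot{\overline V}$ is defined by a ``for all $p\in\partial V(x)$'' condition and can even be empty at limit points, so membership does not pass to limits. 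Consequently your argument proves convergence to the largest weakly invariant subset of $\mathcal{S}\cap\overline{Z}$ --- which is exactly how the invariance principle is stated in \cite{bacciotti.ceragioli.esaim99} and in Cort\'es's survey, with the closure --- but it does not, as written, deliver the closure-free form printed in the theorem above. To close the gap you must either add an argument (or a hypothesis, satisfied in this paper's applications, where $\dot{\overline V}$ is computed explicitly as in (\ref{sv.der})) guaranteeing that $Z$ is closed, or accept the statement with $\overline{Z}$ in place of $Z$; as it stands, ``density of good times plus continuity'' is not a valid substitute for closedness of $Z$.
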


\section{Proof of Lemma \ref{firstlemma}}\label{a1}

\begin{proof}
Following \cite{olfati-saber.murray.tac04}, Theorem 3 (see also \cite{fax.murray.tac04}), we introduce the
$N\times N$ nonsingular matrices
\footnote{
The matrices $T, T^{-1}$ transform the Laplacian matrix $L=DD^T$
into its diagonal form. The columns of $T$ form an orthonormal basis
of $\R^N$.
}
\[
T=(\mathbf{1}_N/\sqrt{N}\;v_2\ldots v_N),\;
T^{-1}=\left(\ba{cccc}
\mathbf{1}_N/\sqrt{N}& w_2 & \ldots &w_N \ea\right)^T
\]
and notice the following:
\be\label{exp.tilde.a}
\ba{l}
{\rm e}^{\tilde A s} (\Pi \otimes I_n)=(T \otimes I_n) \cdot\\[2mm]
\left(\ba{c|ccc}
\mathbf{0}_{n\times n} & \mathbf{0}_{n\times n} & \ldots & \mathbf{0}_{n\times n}\\
\hline
\mathbf{0}_{n\times n} & \exp(A-\lambda_2 BC)s & \ldots & \mathbf{0}_{n\times n}\\
\vdots & \vdots & \ddots & \vdots \\
\mathbf{0}_{n\times n} & \mathbf{0}_{n\times n} & \ldots & \exp(A-\lambda_N BC)s\\
\ea
\right)\cdot\\[10mm]
(T^{-1} \otimes I_n)(\Pi \otimes I_n)
\ea\ee
where
$0<\lambda_2<\ldots<\lambda_N$ are the non-zero eigenvalues of
$DD^T$. Since $(A,B,C)$ is passive and $(C,A)$ is detectable,  then
the matrices $A-\lambda_i BC$, $i=2,\ldots, N$ are Hurwitz. This
implies that  only exponentially stable modes are present in ${\rm
e}^{\tilde A s} (\Pi \otimes I_n)$, and therefore the integral
which defines $R$ exists and is finite.\\
Using the transformation matrix $T$, a routine computation shows that
\begin{align*}
& (\Pi \otimes I_n)^T {\rm e}^{\tilde A^T
s} {\rm e}^{\tilde A s} (\Pi \otimes I_n) \\ & = (\Pi T \otimes I_n)^T \left(\ba{c|ccc}
\mathbf{0}_{n\times n} & \mathbf{0}_{n\times n} & \ldots & \mathbf{0}_{n\times n}\\
\hline
\mathbf{0}_{n\times n} & \exp(A-\lambda_2 BC)^Ts & \ldots & \mathbf{0}_{n\times n}\\
\vdots & \vdots & \ddots & \vdots \\
\mathbf{0}_{n\times n} & \mathbf{0}_{n\times n} & \ldots & \exp(A-\lambda_N BC)^Ts\\
\ea
\right)(T^{-1} \otimes I_n)\cdot 
\end{align*}

\begin{align*}
& (T \otimes I_n) \left(\ba{c|ccc}
\mathbf{0}_{n\times n} & \mathbf{0}_{n\times n} & \ldots & \mathbf{0}_{n\times n}\\
\hline
\mathbf{0}_{n\times n} & \exp(A-\lambda_2 BC)s & \ldots & \mathbf{0}_{n\times n}\\
\vdots & \vdots & \ddots & \vdots \\
\mathbf{0}_{n\times n} & \mathbf{0}_{n\times n} & \ldots & \exp(A-\lambda_N BC)s\\
\ea
\right) (T^{-1}\Pi \otimes I_n)\\
& = ([v_2\ldots v_N] \otimes I_n)^T \left(\ba{ccc}
\exp(A-\lambda_2 BC)^Ts & \ldots & \mathbf{0}_{n\times n}\\
\vdots & \ddots & \vdots \\
\mathbf{0}_{n\times n} & \ldots & \exp(A-\lambda_N BC)^Ts\\
\ea
\right) \cdot \\ & \left(\ba{ccc}
\exp(A-\lambda_2 BC)s & \ldots & \mathbf{0}_{n\times n}\\
\vdots & \ddots & \vdots \\
\mathbf{0}_{n\times n} & \ldots & \exp(A-\lambda_N BC)s\\
\ea
\right) \left(\left[\ba{c}
w_2^T\\ \vdots\\ w_N^T \ea\right] \otimes I_n\right).
\end{align*}
Taking the norm of the matrix,
\[
||(\Pi \otimes I_n)^T {\rm e}^{\tilde A^T
s} {\rm e}^{\tilde A s} (\Pi \otimes I_n)|| \le
 \left\|\left(\ba{ccc}
\exp(A-\lambda_2 BC)s & \ldots & \mathbf{0}_{n\times n}\\
\vdots & \ddots & \vdots \\
\mathbf{0}_{n\times n} & \ldots & \exp(A-\lambda_N BC)s
\ea\right)\right\|^2
\]
from which (\ref{R.bound}) follows.
\\
Rewrite the function $U(\tilde \xi)$ as
\[\ba{rl}
&\dst\int_{t}^{+\infty} \tilde \xi^T(\Pi \otimes I_n)^T {\rm
e}^{\tilde A^T (\tau-t)}
{\rm e}^{\tilde A (\tau-t)} (\Pi \otimes I_n)\tilde \xi d\tau\\
=&\dst\int_{t}^{+\infty} ||\tilde \xi(\tau;\tilde \xi, t)||^2 d\tau
\ea\] where $\tilde \xi(\tau;\tilde \xi, t)$ is the solution to
(\ref{A.tilde}) at time $\tau$ starting from the initial condition
$\tilde \xi$ at time $t$. Following standard converse Lyapunov
theorem arguments (see e.g.\ Khalil, Theorem 4.12) one
easily proves that
\[
\ba{c} c_1||\tilde \xi||^2\le U(\tilde \xi)\le c_2||\tilde \xi||^2 \ea\]
Moreover,
\[\ba{rcl}
\nabla U(\tilde \xi)
\tilde A\tilde \xi &=&
\left[\tilde \xi^T(\Pi \otimes I_n)^T {\rm e}^{\tilde A^T s} {\rm
e}^{\tilde A s} (\Pi \otimes I_n)\tilde \xi
\right]_{s=0}^{s=+\infty}\\
&=&-||\tilde \xi||^2. \ea\]

\end{proof}

\end{document}